\documentclass[10pt,
twoside]{amsart}
\usepackage{lmodern} 
\usepackage{soul, wrapfig}
\usepackage{graphicx}
\usepackage{color}
\usepackage{amsmath}
\usepackage{amssymb}
\usepackage{amsfonts}
\usepackage{latexsym, amsthm, mathrsfs}
\usepackage{hyperref, breakurl}
\usepackage[OT1]{fontenc}
\usepackage{fancybox}
\usepackage{amscd}
\usepackage{enumitem}
\usepackage{fontenc}
\usepackage{amsthm}
\usepackage{graphicx}
\usepackage[english]{babel}
\usepackage{tikz-qtree}
\usepackage{rotating}
\usepackage{stmaryrd}




\usetikzlibrary{arrows, automata, positioning}
\newtheorem{definition2}{Definition}

\newtheorem{lemma}[definition2]{Lemma}
\newtheorem{claim}[definition2]{Claim}
\newtheorem{sublemma}{Sublemma}
\newtheorem{remark2}[definition2]{Remark}
\newtheorem{theorem}[definition2]{Theorem}

\newtheorem*{Mobservation2}{Main Observation}
\newtheorem{example2}[definition2]{Example}
\newtheorem{corollary}[definition2]{Corollary}
\newtheorem{proposition}[definition2]{Proposition}

\newenvironment{definition}{\begin{definition2} \upshape}{\end{definition2}}
\newenvironment{remark}{\begin{remark2} \upshape}{\end{remark2}}


\newcommand{\algebra}{}
\newcommand{\algebraQ}{\algebra{B}}
\newcommand{\amal}{\textsf{Am}}

\newcommand{\baire}{\omega^\omega}

\newcommand{\cantor}{2^\omega}

\newcommand{\cohen}{\poset{C}}
\newcommand{\conc}{\smallfrown}

\newcommand{\DDelta}{\mathbf{\Delta}}

\newcommand{\dom}{\text{dom}}

\newcommand{\enfa}{\textit}

\newcommand{\ideal}{\mathcal}

\newcommand{\ifif}{\Leftrightarrow}
\newcommand{\force}{\Vdash}

\newcommand{\mathias}{\poset{M}}

\newcommand{\On}{\text{On}}

\newcommand{\poset}{\mathbb}

\newcommand{\random}{\poset{B}}

\newcommand{\real}{\mathbb{R}}

\newcommand{\restric}{{\upharpoonright}}

\newcommand{\SSigma}{\mathbf{\Sigma}}

\newcommand{\stem}{\textsf{stem}}

\newcommand{\supp}{\textsf{supp}}

\newcommand{\term}{\textsc{Term}}

\newcommand{\nl}{\mathbf{NL}}

\newcommand{\nr}{\mathbf{NR}}

\newcommand{\U}{\textbf{U}}
\newcommand{\SP}{\textbf{SPA}}
\newcommand{\IP}{\textbf{IPA}}
\newcommand{\WP}{\textbf{WPA}}
\newcommand{\NB}{\textbf{NB}}
\newcommand{\NL}{\textbf{NL}}
\newcommand{\NR}{\textbf{NR}}

\newcommand{\w}{\omega}

\begin{document}
\title[Social welfare relations and irregular sets
]{Social welfare relations and irregular sets}
\author{Ram Sewak Dubey}
\address[R. S. Dubey]{Department of Economics, Feliciano School of Business, Montclair State University, Montclair, NJ, 07043, USA.}
\email{dubeyr@montclair.edu}

\author{Giorgio Laguzzi}
\address[G. Laguzzi]{Albert-Ludwigs-Universit\"at Freiburg, Mathematische Institut, Abteilung f\"ur Mathematische Logik,  Ernst-Zermelo str. 1, 79104 Freiburg im Breisgau, Germany}
\email[Corresponding author]{giorgio.laguzzi@libero.it}

\maketitle

\begin{abstract}
Social welfare relations satisfying Pareto and equity principles on infinite utility streams has revealed a non-constructive nature. In this paper we study more deeply the needed fragments of AC and their relations with other well-known non-constructive sets. We also prove some connections with the Baire property, answering Problem 11.14 posed in \cite{Mathias}.
\end{abstract}

\section{Introduction and basic definitions}

In recent years, various papers have shown some interplay between theoretical economics and mathematical logic. More specifically, some connections have risen between social welfare relations on infinite utility streams and descriptive set theory. In particular the following results have been proven:
\begin{itemize}
\item in \cite{Lau09} Lauwers proves that the existence of a total social welfare relation satisfying infinite Pareto and anonymity implies the existence of a non-Ramsey set.
\item in \cite{Z07} Zame proves that the existence of a total social welfare relation satisfying strong Pareto and anonymity implies the existence of a non-Lebesgue measurable set.
\end{itemize}
(Precise definitions of these combinatorial concepts from economic theory are introduced in Definition \ref{def1} below.)

So in terms of set-theoretical considerations, these results mean that the existence of these specific relations satisfying certain combinatorial principles from economic theory are connected to a fragment of the axiom of choice, AC. 
As a consequence, from the set-theoretical point of view, it is natural and interesting to understand more deeply the \emph{exact} fragment of AC they correspond to, in particular compared to
other objects coming from measure theory, topology and infinitary combinatorics, extensively studied in the set-theoretic literature (for a detailed overview see \cite{Ikegami},\cite{Khomskii},\cite{BL99}). More precisely, we show that the reverse implications of Lauwers and Zame's results do not hold, and therefore total social welfare relations satisfying Pareto and anonymity need a strictly larger fragment of AC than non-Lebesgue measurable and non-Ramsey sets. 
Moreover we are going to analyse social welfare relations from the topological point of view, and specifically the connection with the Baire property.
This question is explicitely asked in \cite[Problem 11.14]{Mathias} and it was the main motivation arousing this paper. We deeply thank Adrian Mathias for such a fruitful inspiration. 

\vspace{3mm}

Since the motivation of this paper comes from the study of some combinatorial concepts studied in economic theory, we briefly remind the basic notions about social welfare relations and infinite utility streams, in as much detail as required for our scope.

We consider a \emph{set of utility levels} $Y$ (or \emph{utility domain}) endowed with some topology and totally ordered, and we call $X:= Y^\omega$ the corresponding \emph{space of infinite utility streams}, endowed with the product topology.
Given $x,y \in X$ we write $x \leq y$ iff $\forall n \in \omega (x(n) \leq y(n))$, and $x < y$ iff $x \leq y \land \exists n \in \omega (x(n) < y(n))$. Furthermore we set $\ideal{F}:= \{ \pi: \omega \rightarrow \omega: \text{ finite permutation} \}$, and we define, for $x \in X$, $f_\pi(x):= \langle x({\pi(n)}): n \in \omega \rangle$.

We say that $\precsim$ subset of $X \times X$ is a \emph{social welfare relation (SWR) on $X$} iff $\precsim$ is reflexive and transitive.
Next we introduce the theoretical economic principles used in this paper.

\begin{definition} \label{def1} 
Let $\precsim$ be a SWR on $X$. We say that $\precsim$ satisfies:
\begin{itemize}
\item \label{D1} \emph{Anonymity (A)} iff whenever given $x, y\in X$ there exist $i, j\in \w$ such that $y(j) = x(i)$ and $x(j) = y(i)$, while $y(k) = x(k)$ for all $k\in \w\setminus \{i, j\}$, then $x \sim y$.
\item \label{D2} \emph{Strong Pareto (SP)} iff for all $x, y\in X$, if $x\leq y$ and $x(i)<y(i)$ for some $i \in \w$, then $x \prec y$.
\item \label{D3} \emph{Infinite Pareto (IP)} iff for all $x, y\in X$, if $x\leq y$ and $x(i)<y(i)$ for infinitely many  $i \in \w$, then $x \prec y$.
\item \label{D4} \emph{Weak Pareto (WP)} iff for all $x, y\in X$, if $x(i)<y(i)$ for all $i \in \w$, then $x \prec y$.
\end{itemize}
\end{definition}
It is immediate to notice that SP $\Rightarrow$ IP $\Rightarrow$ WP. 

From descriptive set theory of the reals we recall the following notions:
\begin{itemize}
\item $X \subseteq [\omega]^\omega$ is \emph{non-Ramsey} iff for every $F \in [\omega]^\omega$ one has $[F]^\omega \not \subseteq X$ and $[F]^\omega \cap X \neq \emptyset$.
\item $X 	\subseteq 2^\omega$ is Lebesgue measurable iff there exists a Borel set $B \subseteq 2^\omega$ such that $X \Delta B$ has measure zero. In case a set is not Lebesgue measurable we call it \emph{non-Lebesgue}.
\item  $X \subseteq 2^\omega$ satisfies the \emph{Baire property} iff there exists an open set $O \subseteq 2^\omega$ such that $X \Delta O$ is meager. In case a set does not satisfy the Baire property  we call it \emph{non-Baire}.
\end{itemize}

Throughout the paper, we use the following notation:
\[
\begin{split}
\SP_Y :=& \text{``There exists a total SWR on $Y^\omega$ satisfying A and SP"} \\
\IP_Y :=& \text{``There exists a total SWR on $Y^\omega$ satisfying A and IP"} \\
\WP_Y :=& \text{``There exists a total SWR on $Y^\omega$ satisfying A and WP"} \\
\NL :=& \text{``There exists a non-Lebesgue set"}\\
\NR :=& \text{``There exists a non-Ramsey set"}\\
\NB :=& \text{``There exists a non-Baire set"}
\end{split}
\]

\begin{remark} \label{Remark2}
In the first three symbols, involving statements on total SWRs, we have specified the utility domain, as the nature of such SWRs strongly depends on $Y$. To see this, one can observe that combining A and WP is trivial when $Y=\{ 0,1 \}$, since for instance we can simply define $\prec$ so that for all $y \in 2^\omega$ which are not the constant sequence $e_0:= \langle 0, 0, \dots \rangle$ we have $e_0 \prec y$, but on the other hand the combination of A and WP gives non-constructive SWRs when $Y=[0,1]$ (for instance, see Proposition \ref{P-wp}).
\end{remark}

\begin{remark}
The study on infinite populations and these combinatorial principles has been rather extensively developed in the economic literature. Summarizing the reasons and analysing the intepretations in the context of economic theory is away from the aim of this paper, which should be meant as a contribution to a set-theoretic question coming from the study of combinatorial concepts introduced in economic theory, rather than an effective application of set theory to economic theory. The reader interested in detailed background from economic theory literature may consult the following selected list of papers: \cite{Asheim2010}, \cite{Au64}, \cite{Chi*}, \cite{Chi96}, \cite{Litak}, \cite{Lau09}, \cite{LV97}, \cite{Z07}.
\end{remark}

\vspace{3mm}

The technical tools from forcing and descriptive set theory of the reals are introduced through the paper when specifically needed.

\section{Topological side of SWRs} \label{section1}
In this section we investigate the topological properties of SWRs satisfying Pareto and anonymity (focusing on the Baire/product topology), and proving some interplay with non-Baire sets, answering Problem 11.14 posed in \cite{Mathias}.

We start with an example. Let $X := [0,1]^\omega$ and
 define $\rhd$ (usually called Suppes-Sen principle) as follows: for every $x,y \in X$, we say 
\[
\begin{split}
x \rhd y & \text{ iff there exists $\pi \in \mathcal{F}$ such that $f_\pi(x) > y$}.\\
x \sim y & \text{ iff there exists $\pi \in \mathcal{F}$ such that $f_\pi(x) = y$}.
\end{split}
\]
Let $\supp_r(y):= \{ n \in \omega: y(n)\neq r \}$, for a given $r \in [0,1]$.
It is clear that $\rhd$ is a SWR satisfying SP and A.
We consider the standard euclidean topology on $[0,1]$ and then the corresponding product topology on $X:=[0,1]^\omega$.

\begin{remark}
The Suppes-Sen principle is rather coarse from the topological point of view, as many pairs $x,y \in X$ are incompatible w.r.t. $\rhd$. 
More precisely, $S:= \{(x,y) \in X \times X: x  \ntriangleright y \land y  \ntriangleright x \land x \not \sim y \}$ is comeager. 

Let $S'$ be the complement of $S$. 
We show that $S'$ is meager. 
First partition $S'$ into three pieces: 
$E:= \{ (x,y) \in X \times X: x \rhd y \}$, $D:= \{ (x,y) \in X \times X: y \rhd  x \}$ and $C:= \{ (x,y) \in X \times X: y \sim  x\}$.
We check that $E$ is meager and then note that similar arguments work for $D$ and $C$ as well.
Fix $y \in X$ so that $\supp_0(y)$ is infinite (i.e., $y$ is not eventually 0) and consider $E^y:= \{ x \in X : (x,y) \in E \}$. Let $H^y:= \{x \in X:  x > y \}$.
Note that 
\(
E^y:= \bigcup_{\pi \in \mathcal{F}} H^{f_\pi(y)}.
\)
Since $\mathcal{F}$ is countable it is enough to prove that for each $\pi \in \mathcal{F}$, $H^{f_\pi(y)}$ is meager.
Actually $H^y$ is nowhere dense, for every $y \in X$ with $|\supp_0(y)| =\omega$; in fact, given $U:= \prod_{n \in \omega} U_n \subseteq X$ basic open set and $k \in \omega$ sufficiently large that for all $n \geq k$, $U_n=[0,1]$,
one can pick $n^* > k$ such that $n^* \in\supp_0(y)$ and pick $U' \subseteq U$ so that: 
$\forall n \neq n^*$, $U_n=U'_n$ and
$U'_{n^*}:= [0,y(n^*))$.
Then it is clear that $U' \cap H^y=\emptyset$. Note that if $\pi \in \mathcal{F}$ we get $|\supp_0(f_\pi(y))|=\omega$ as well, and so $H^{f_\pi(y)}$ is nowhere dense too.
By Ulam-Kuratowski theorem, the proof is concluded simply by noticing that the set $\{y \in X: |\supp_0(y)|=\omega  \}$ is comeager, which easily follows since each $B_n:= \{ y \in B: |\supp_0(y)| \leq n \}$ in nowhere dense. 

\end{remark}

Under this point of view the Suppes-Sen principle can then be considered rather ``poor", for a positive characteristic of a SWR is to be able of comparing as many elements as possible. 
Part 1 of the following proposition shows that this coarse nature of such SWRs is not only specific for Suppes-Sen principle, but in a sense it holds for any ``regular" SWR satisfying A and SP. Moreover, in part 2 we show that when assuming $\SP_Y$, the price to pay is to get a set without Baire property. In the following we consider $X=[0,1]^\omega$.

\begin{proposition} \label{prop:non-BP}
Let $X=[0,1]^\omega$. Then the following hold:
\begin{enumerate}
\item Let $\precsim$ be a SWR satisfying A and SP on $X$, $E:= \{(x,y) \in X \times X: x  \succ y\}$ and $D:=\{(x,y) \in X \times X: y  \succ x  \}$.
If both $E$ and $D$ have the Baire property, then $E \cup D$ is meager.
\item Let $\precsim$ be a total SWR satisfying A and SP on $X$, and let E,D as above.
Then either $E$ or $D$ does not have the Baire property.
\end{enumerate}
\end{proposition}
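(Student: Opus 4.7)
The backbone of both parts is an \emph{invariance principle}: by anonymity together with transitivity, $E$ and $D$ are invariant under the product action $(\pi,\sigma)\cdot(x,y):=(f_\pi(x),f_\sigma(y))$ of $\mathcal{F}\times\mathcal{F}$ on $X\times X$, since $f_\pi(x)\sim x\succ y\sim f_\sigma(y)$ yields $f_\pi(x)\succ f_\sigma(y)$. Moreover, the swap $s\colon(x,y)\mapsto(y,x)$ is a homeomorphism of $X\times X$ interchanging $E$ and $D$.

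For part~(1), I would assume both $E$ and $D$ have BP and aim to show each is meager. If $E$ were non-meager, then by BP it is comeager in some basic open $U_1\times U_2\subseteq X\times X$ (each factor restricting only finitely many coordinates), so via the swap $D$ is comeager in $U_2\times U_1$. Picking finite permutations $\pi,\sigma\in\mathcal{F}$ that move the coordinates restricted by $U_1$ and $U_2$ to indices disjoint from those of the other factor, the invariance principle keeps $E$ comeager in $f_\pi(U_1)\times f_\sigma(U_2)$; the intersection $W:=(f_\pi(U_1)\times f_\sigma(U_2))\cap(U_2\times U_1)$ is then a non-empty basic open on which both $E$ and $D$ are comeager, forcing $E\cap D\cap W\neq\emptyset$ and contradicting $E\cap D=\emptyset$. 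Hence $E$ (and symmetrically $D$) is meager, so $E\cup D$ is meager.

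For part~(2), assume both $E$ and $D$ have BP. By~(1), $E\cup D$ is meager, so by totality $I:=\{(x,y):x\sim y\}$ is comeager and has BP. Kuratowski--Ulam then gives a comeager set of $x$'s for which the section $I^x=[x]_\sim$ is comeager in $X$. Since any two comeager equivalence classes of $\sim$ must intersect and therefore coincide, there is a unique $\sim$-class $C$ which is comeager in $X$. The remaining step is to extract a concrete topological incompatibility from SP. I would use the map $\psi\colon X\to X$ defined by $\psi(x)(0):=(x(0)+1)/2$ and $\psi(x)(n):=x(n)$ for $n\geq 1$: it is a homeomorphism from $X$ onto $\psi(X)=[1/2,1]\times[0,1]^\omega$, and whenever $x(0)<1$ it satisfies $\psi(x)>x$ coordinate-wise with strict inequality at coordinate~$0$, so SP gives $\psi(x)\succ x$ and hence $\psi(x)\notin[x]_\sim$.

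Setting $A:=C\cap\{x:x(0)<1\}$, which is comeager in $X$, one obtains $\psi(A)\subseteq X\setminus C$. Since $\psi$ is a homeomorphism, $\psi(A)$ is comeager in $\psi(X)$; the open set $U:=(1/2,1]\times[0,1]^\omega\subseteq\psi(X)$ has nowhere-dense complement $\{1/2\}\times[0,1]^\omega$ in $\psi(X)$, so $U$ is comeager in $\psi(X)$, and hence $\psi(A)\cap U$ is comeager in $U$. On the other hand, $\psi(A)\cap U\subseteq X\setminus C$ sits inside a meager subset of $X$, so it is meager in the open set $U$ of the Baire space $X$, giving the desired contradiction. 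The hard part here is precisely designing $\psi$: it must shift every $x\in C$ off its own $\sim$-class using only the hypothesis SP, while remaining a homeomorphism whose image has non-empty interior in $X$, so that the meager/comeager clash can be transported back to a non-empty open subset of the ambient space.
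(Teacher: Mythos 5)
Your proof is correct, and it differs from the paper's in interesting ways.

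For part (1), the paper invokes the Kechris topological zero--one law (Theorem 8.46 of \cite{Kechris}): after replacing $E$ by a Borel $B^*\subseteq E$ closed under the $\mathcal{F}\times\mathcal{F}$-action, every vertical section $B^*_y$ is $\mathcal{F}$-invariant and hence either meager or comeager, and one then splits $X$ into $I_0$ and $I_1$ and applies Kuratowski--Ulam twice (to $E$ and to $D$) to derive a contradiction with $E\cap D=\emptyset$. Your argument is more elementary: you observe that a non-meager BP set is comeager in a basic open rectangle $U_1\times U_2$, use the swap to transfer this to $D$ on $U_2\times U_1$, and then exploit the $\mathcal{F}\times\mathcal{F}$-invariance of $E$ together with the fact that the support of a basic open can be translated off any fixed finite set to produce a single non-empty open $W$ on which both $E$ and $D$ are comeager. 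This avoids the zero--one law entirely and replaces it with a direct Baire-category clash, which is arguably cleaner; the paper's route does, however, yield the extra structural information that the sections $B^*_y$ obey a strict meager/comeager dichotomy.

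For part (2), both proofs run on the same engine: Kuratowski--Ulam applied to the comeager agreement set $I$, followed by a ``small positive shift'' in coordinate $0$ whose SP-consequence destroys $\sim$-equivalence. The paper works locally: it fixes one $y$ with $A_y$ comeager, restricts to the slab $H=[0,1-r]\times\prod[0,1]$, uses the additive shift $\phi(x)(0)=x(0)+r$, and plays off emptiness of $H\cap A_y\cap\phi[H\cap A_y]$ against its non-meagerness in the non-empty open set $H\cap\phi[H]$. You instead isolate a single comeager $\sim$-class $C$, use the affine rescaling $\psi(x)(0)=(x(0)+1)/2$ (a homeomorphism from $X$ onto $[1/2,1]\times[0,1]^\omega$), and derive the contradiction by comparing $\psi(A)$'s comeagerness inside $U=(1/2,1]\times[0,1]^\omega$ with the meagerness of $X\setminus C$. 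Both are correct; your version avoids the extra parameter $r$ at the cost of the slightly redundant observation that the comeager $\sim$-class is unique (you only need that one such class exists).

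One small point worth making explicit in your write-up: the invariance of $E$ and $D$ under $(\pi,\sigma)\mapsto(f_\pi\times f_\sigma)$ uses only Anonymity plus transitivity, so part (1) never invokes SP -- this matches the paper, where SP is used only in part (2).
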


\begin{proof}
Under the assumption $E,D$ both satisfying the Baire property, we show that $E$ is meager, and remark that the argument for $D$ is essentially the same. 
Given $S \subseteq X \times X$ and $y \in X$, we use the notation $S_y:= \{x \in X: (x,y) \in S  \}$.
Since we assume $E$ has the Baire property, we can find a Borel set $B \subseteq E$ such that $E \setminus B$ is meager; moreover for every $\pi, \pi' \in \mathcal{F}$ we can define $B(\pi,\pi'):= \{ (f_\pi(x), f_{\pi'}(y)): (x,y) \in B \}$. Put $B^*:= \bigcup \{ B(\pi,\pi'): \pi, \pi' \in \mathcal{F}  \}$ and note that $B^* \subseteq E$, as $E$ is closed under finite permutations. Moreover, $E \setminus B^*$ is meager too. 
Let $I_0:= \{ y \in X : B^*_y  \text{ is meager} \}$ and $I_1:= \{ y \in X : B^*_y  \text{ is comeager} \}$. Note that each $B^*_y$ is by definition invariant under finite permutations,
i.e., $x \in B^*_y \ifif f_\pi(x) \in B^*_y$, where $\pi \in \mathcal{F}$. 

 Hence by \cite[Theorem 8.46]{Kechris} with $G$ being the group on $X$ induced by finite permutations, we have that each $B^*_y$ is either meager or comeager, and hence $I_0 \cup I_1 = X$. We also observe that both $I_0$ and $I_1$ are invariant under $\pi \in \mathcal{F}$. In fact, it is straightforward to check that if $\pi \in \mathcal{F}$ and $B^*_y$ is meager,  then $B^*_{f_\pi(y)}$ is meager too. 
So, if $I_1$ is comeager, by Kuratowski-Ulam theorem we get $E$ is comeager. But since an analogous argument could be done for $D$ too, we would have that also $D$ is comeager; however by definition $E \cap D = \emptyset$, which is a contradiction.
As a consequence, we get $I_0$ is comeager, which implies $E$ (and $D$ as well) is meager.  

\vspace{2mm}

$2.$ Note that in this case the SWR is total and so, if $E$ and $D$ both satisfy the Baire property, it follows that the set $A:= \{ (x,y) \in X \times X: x \sim y \}$ is comeager. 
Thus, by Kuratowski-Ulam's theorem there is $y \in X$ such that $A_{y}$ is comeager. 
Pick $0<r<\frac{1}{2}$, define 
\[
H:= [0,1-r] \times \prod_{i \in \omega} [0,1],
\] 
and consider the injective function $\phi: X' \rightarrow X$ such that $i(x(0)):= x(0)+ r$. Note that 
\[
\phi[H]:= [r,1] \times \prod_{i \in \omega} \big [0,1],
\]
Note also that for every $x \in H$, $\phi(x) \succ x$ by Pareto, and so in particular $x \sim y \ifif x \not \sim \phi(y)$. Hence, we have the following two mutually contradictory consequences.
\begin{itemize}
\item On the one side, $H \cap A_y \cap \phi[H \cap A_y]=\emptyset$;
indeed if there exists $z \in H \cap A_y \cap \phi[H \cap A_y]$, then there is $x \in H \cap A_y$ such that $z := \phi(x)$; then on the one hand we have $z \in A_y$ which gives $z \sim y$, but on the other hand we have $x \in H \cap A_y$ that in turn gives $x \sim y$ and so together with $x \prec \phi(x)=z$ we would get $y \prec z$; contradiction. 
\item On the other side, $H \cap A_y \cap \phi[H \cap A_y]$ cannot be meager, since $H \cap \phi[H]$ is a non-empty open set, $H \cap A_y$ is comeager in $H$ and $\phi[H \cap A_y]$ is comeager in $\phi[H]$.
\end{itemize}
\end{proof}

\begin{remark}
Note that Proposition \ref{prop:non-BP} holds even when $Y=\{0,1\}$. For part (1) we can argue with the same proof, while for part (2) we only need to consider the map $\phi: X \rightarrow X$ such that $\phi(x)(0)\neq x(0)$ and for all $n > 0$, $\phi(x)(n)=x(n)$, and so $\phi(x) \not \sim x$.  
\end{remark}

\subsection{Mathias-Silver trees}
We recall the standard basic notions and notation about tree-forcings.
A subset $T \subseteq Y^{<\w}$ is called a \emph{tree} if and only if for every $t \in T$ every $s \subseteq t$ is in $T$ too, in other words, $T$ is \emph{closed} under initial segments.
We call the segments $t \in T$ the \emph{nodes} of $T$ and denote the \emph{length} of the node by $|t|$; $\stem(T)$ is the longest node such that $\forall t \in T (t \subseteq \stem(T) \vee \stem(T) \subseteq t)$. 
A node $t \in T$ is called \emph{splitting} if there are two distinct $n$, $m \in Y$ such that $t^\conc n$, $t^\conc m \in T$.
Given $x \in Y^{\w}$  and $n \in \w$, we denote by $x \restric n$ the cut of $x$ of length $n$, i.e., $x \restric n := \langle x(0), x(1), \cdots, x(n-1) \rangle$.
A tree $p \subseteq Y^{<\w}$ is called \emph{perfect} if and only if for every $s \in p$ there exists $t \supseteq s$ splitting.
We define $[p]:=\{x \in Y^\omega: \forall n \in \w (x\restric n \in p)\}$, and $x \in [p]$ is called a \emph{branch of $p$}. 

A tree $p \subseteq 2^{<\w}$ is called \emph{Silver} tree if and only if $p$ is perfect and for every $s$, $t \in p$, with $|s|=|t|$ one has $s^\conc 0 \in p$ $\ifif t^\conc 0 \in p$ and $s^\conc 1 \in p$  $\ifif t^\conc 1 \in p$.
If $t$ is a splitting node of $p$, we call $|t|+1$ a splitting level of $p$ and let $S(p)$ denote the set of splitting levels of $p$. 
Then set $U(p):= \{n \in \w: \forall x \in [p] (x(n)=1)\}$ and let $\{n^p_k: k \in \w\}$ enumerate the set $S(p) \cup U(p)$. 

We could also define a Silver tree $p$ and its corresponding set of branches $[p]$ relying on the notion of partial functions. 
Consider a partial function $f: \w \rightarrow \{0, 1\}$ such that $\dom(f)$ is co-infinite (i.e. the complement of the domain of $f$ is infinite); then define $N_f:= \{x \in 2^{\w}: \forall n \in \dom(f) (f(n)=x(n))\}$. 
It easily follows from the definitions that there is a one-to-one correspondence between every Silver tree $p$ and a set $N_f$;
given any Silver tree $p$ there is a unique partial function $f: \w \rightarrow \{0, 1\}$ such that $[p]=N_f$.
In particular, the set of splitting levels $S(p)$ correspond to $\w \setminus \dom(f)$. 
Silver trees are extensively studied in the literature, as well as their topological properties (e.g., see \cite{Halbeisen2003} and \cite{BLH2005})

We now introduce a variant of Silver trees which perfectly serves for our purpose.

\begin{definition}
Let $p \subseteq 2^{<\w}$ be a Silver tree with $\{ n_k^p:k \geq 1 \}$ enumeration of $S(p) \cup U(p)$; $p$ is called a \emph{Mathias-Silver} tree ($p \in \poset{MV}$) if and only if there are infinitely many triples $(n^p_{m_j}, n^p_{m_j+1}, n^p_{m_j+2})$'s such that:
\begin{enumerate}
\item for all $j \geq 1$, $m_j$ is even;
\item for all $j \geq 1$,  $n^p_{m_j}, n^p_{m_j+1}, n^p_{m_j+2}$ are in $S(p)$ with $n^p_{m_j} +1< n^p_{m_j+1}$ and $n^p_{m_j+1}+1 < n^p_{m_j+2}$;
\item for all $j \geq 1$, $t \in p$, $i < |t|$ $(n^p_{m_j} < i < n^p_{m_j+1} \vee n^p_{m_j+1} < i < n^p_{m_j+2}  \Rightarrow t(i)=0)$.
\end{enumerate}
We call $(n^p_{m_j}, n^p_{m_j+1}, n^p_{m_j+2})$ satisfying (1), (2) and (3) a \emph{Mathias triple}. 
\end{definition}
\begin{remark}
The idea is that a Mathias-Silver tree is a special type of a Silver tree that mimics infinitely often the feature of a Mathias tree, which is that in between the splitting levels occuring in any Mathias triple $(n^p_{m_j}, n^p_{m_j+1}, n^p_{m_j+2})$ all nodes of the tree $p$ take value 0. In the proof of propositions \ref{P-ip} and \ref{P-wp} this property will be crucial, and indeed it is not clear how to obtain, if possible, similar results working with Silver trees instead of Mathias-Silver trees.
\end{remark}

\begin{definition}
A set $X \subseteq 2^\w$ is called \emph{Mathias-Silver measurable set} (or $\poset{MV}$-measurable set) if and only if there exists $p \in \poset{MV}$ such that $[p] \subseteq X$ or $[p] \cap X = \emptyset$.
A set $X \subseteq 2^\w$ not satisfying this condition is called a \emph{non-$\poset{MV}$-measurable set}.
\end{definition}

\noindent The following lemma is the key step to prove that any set satisfying the Baire property is $\poset{MV}$-measurable, or in other words, that a non-$\poset{MV}$-measurable set is a particular instance of a non-Baire set. 
The proof is a variant of the construction developed in \cite{Halbeisen2003} for standard Silver trees. 

\begin{lemma}\label{l1}
Given any comeager set $C \subseteq 2^\w$ there exists $p \in \poset{MV}$ such that $[p] \subseteq C$.
\end{lemma}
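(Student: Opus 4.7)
The plan is to construct $p$ by a fusion argument, following the template of the Silver-tree construction inside comeager sets from \cite{Halbeisen2003}, but with extra bookkeeping to respect the Mathias-triple shape. Since $C$ is comeager, I write $2^\omega \setminus C \subseteq \bigcup_{n \in \omega} F_n$ with each $F_n$ closed nowhere dense, and set $C_n := 2^\omega \setminus F_n$, so that each $C_n$ is open dense and $\bigcap_n C_n \subseteq C$. It then suffices to exhibit $p \in \poset{MV}$ with $[p] \subseteq \bigcap_n C_n$. I build $p$ as the limit of finite ``Silver prefixes'' $p_0 \subseteq p_1 \subseteq \cdots$, where $p_n$ is specified by a string $s_n \in 2^{<\omega}$ (recording the already-fixed values at non-splitting levels below $|s_n|$) together with a set $S_n \subseteq |s_n|$ of committed splitting levels, the nodes of $p_n$ at height $|s_n|$ being the $2^{|S_n|}$ extensions of $s_n$ that agree with $s_n$ outside of $S_n$.

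At stage $n$ I pass from $p_n$ to $p_{n+1}$ in two substeps. Firstly, to force every branch of $p$ into $C_n$, I enumerate the $2^{|S_n|}$ nodes $\tau^{(1)}, \dots, \tau^{(2^{|S_n|})}$ of $p_n$ at height $|s_n|$ and construct a common uniform extension $\rho \in 2^{<\omega}$ iteratively: starting from $\rho = \langle \rangle$, at sub-step $i$ the density of the open set $C_n$ supplies $\eta^{(i)}$ with the cylinder $\{x \in 2^\omega : \tau^{(i)} \conc \rho \conc \eta^{(i)} \subseteq x\} \subseteq C_n$, and I then replace $\rho$ by $\rho \conc \eta^{(i)}$. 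Openness of $C_n$ guarantees that cylinders secured at earlier sub-steps survive the subsequent appendages, so at the end of the iteration the cylinder above $\tau^{(i)} \conc \rho$ is contained in $C_n$ for every $i$. Secondly, I append to $s_n \conc \rho$ a block of the shape $(\text{optional } 1)(\text{split})(0)(\text{split})(0)(\text{split})$, declaring the three indicated new positions to belong to $S_{n+1}$; this produces a new Mathias triple. The optional initial $1$ is inserted precisely when needed to make the running count of elements of $S(p) \cup U(p)$ below the first new split even, so that the first split lands at an even-indexed position $m_j$ of the global enumeration of $S(p) \cup U(p)$, securing condition (1); the two $0$'s sandwiched between the three splits give the gap condition (2) and the value condition (3) automatically, since those $0$-levels are then neither in $S(p)$ nor in $U(p)$.

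Passing to the limit, the Silver tree $p$ with splitting set $\bigcup_n S_n$ and fixed-value function $\bigcup_n s_n$ on its complement belongs to $\poset{MV}$ because the construction contributes one new Mathias triple per stage, and every branch $x \in [p]$ extends some $\tau^{(i)} \conc \rho$ at stage $n$, hence $x \in C_n$; combining over all $n$ gives $[p] \subseteq \bigcap_n C_n \subseteq C$. The main delicate point is harmonizing the standard Silver-fusion bookkeeping -- which need only track splitting levels -- with the rigid combinatorial shape of Mathias triples and in particular with the even-index requirement on the enumeration of $S(p) \cup U(p)$; that is the role of the parity-correcting $1$ in the second substep, without which one would only produce a Silver tree rather than a Mathias-Silver one.
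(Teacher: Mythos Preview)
Your argument is essentially the paper's own: write $C$ as an intersection of open dense sets, build a Silver tree by iteratively extending all terminal nodes uniformly into the next open dense set, and after each such extension append a fixed block that contributes one Mathias triple (the paper uses the eight length-$5$ strings $s_1,\dots,s_8$, which encode exactly the pattern \emph{split--$0$--split--$0$--split} you describe). One small slip: with the paper's $1$-based enumeration $\{n^p_k:k\geq 1\}$, if the running count of $S(p)\cup U(p)$ below the first new split is $c$, then that split receives index $c+1$, so you need $c$ \emph{odd} (not even) to land at an even $m_j$; your parity-correcting $1$ is the right mechanism, just insert it in the opposite case. In fact your explicit treatment of condition~(1) is more careful than the paper's, which asserts the Mathias-triple property from the $s_k$'s without addressing parity.
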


\begin{proof} 
Let $\{D_n: n \in \w\}$ be a $\subseteq$-decreasing sequence of open dense sets such that $\underset{n\in\w}{\bigcap} D_n \subseteq C$. 
Given $s \in 2^{<\omega}$, put $N_s:=\{x \in 2^\omega: x \supset s\}$.
Recall that if $D$ is open dense, then $\forall s \in 2^{<\w}$ there exists $s^{\prime} \supseteq s$ such that $N_{s^{\prime}} \subseteq D$.
We construct $p \in \poset{MV}$ by recursively building up its nodes as follows: first of all let 
\[
\begin{split}
s_1= (10000), s_2=(10001), s_3=(10100),  s_4=(10101),\\
s_5=(00000), s_6=(00001), s_7=(00100),  s_8=(00101).
\end{split}
\]
\begin{itemize} 
\item Pick $t_{\emptyset} \in 2^{<\w}$ such that $N_{t_\emptyset} \subseteq D_0$, and then let $F_0:= \overset{8}{\underset{k=1}{\bigcup}}\left\{t_\emptyset^\conc s_k\right\}$ and $T_0$ be the downward closure of $F_0$, i.e., $T_0:= \left\{s \in 2^{<\w}: \exists t \in F_0 (s \subseteq t) \right\}$;
\item Assume $F_n$ is already defined. 
Let $\left\{t_j: j \leq J \right\}$ enumerate all nodes in $F_n$ (note by construction $J=8^{n+1}$). We proceed inductively as follows: pick $r_0 \in 2^{<\w}$ such that $N_{t_0^\conc r_0} \subseteq D_{n+1}$; then pick $r_1 \supseteq r_0$ such that $N_{t_1^\conc r_1} \subseteq D_{n+1}$; proceed inductively in this way for every $j \leq J$, so $r_j \supseteq r_{j-1}$ such that $N_{t_j^\conc r_j} \subseteq D_{n+1}$. 
Finally put $r=r_J$. 
Then define 
\[
\begin{split}
F_{n+1}&:= \bigcup \left\{ t^\conc r^\conc s_k: t \in F_n, k=1,2, \dots 8 \right\} \\
T_{n+1}&:= \left\{ s \in 2^{<\w}: \exists t \in F_{n+1} (s \subseteq t)\right\}.
\end{split}
\]
\end{itemize} 
Note that by construction, for all $t \in F_{n+1}$ we have $N_t \subseteq D_{n+1}$.
Finally put $p := \underset{n\in\w}{\bigcup} T_n$. 
Then by construction $p \in \poset{MV}$ as it is a Silver tree and the use of $s_1, s_2, \cdots, s_8$ ensures that $p$ contains infinitely many Mathias triples, and so $p \in \poset{MV}$. 
It is left to show $[p] \subseteq  \underset{n\in\w}{\bigcap} D_n$. 
For this, fix arbitrarily $x \in [p]$ and $n \in \w$; 
by construction there is $t \in F_n$ such that $t \subset x$.
Since $N_t \subseteq D_n$ we then get $x \in N_t \subseteq D_n$. 
\end{proof}

\begin{corollary} \label{C1}
If $A \subseteq 2^{\w}$ satisfies the Baire property, then $A$ is a $\poset{MV}$-measurable set.
\end{corollary}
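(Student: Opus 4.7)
The plan is to reduce the statement to Lemma \ref{l1} by a standard dichotomy on the Baire property. Given $A$ with the Baire property, either $A$ is meager or there is some basic open set $N_s$ (with $s \in 2^{<\omega}$) in which $A$ is comeager.

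In the first case, $2^\omega \setminus A$ is comeager, and Lemma \ref{l1} directly yields $p \in \poset{MV}$ with $[p] \subseteq 2^\omega \setminus A$, hence $[p] \cap A = \emptyset$.

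In the second case, I would use that if $A \triangle O$ is meager for some open $O$ and $A$ is not meager, then $O \neq \emptyset$, so $O$ contains some basic open set $N_s$; then $N_s \setminus A \subseteq A \triangle O$ is meager, so $A$ is comeager in $N_s$. To exploit this, I would observe that Lemma \ref{l1} can be localized: inspecting its proof, the only requirement on the initial stem $t_\emptyset$ is that $N_{t_\emptyset} \subseteq D_0$; since $D_0$ is dense, we can further insist that $t_\emptyset \supseteq s$, and the remainder of the recursion proceeds unchanged, producing $p \in \poset{MV}$ with $[p] \subseteq N_s$. Equivalently, one can apply Lemma \ref{l1} to the comeager set $C := (A \cap N_s) \cup (2^\omega \setminus N_s)$ while simultaneously enforcing $[p] \subseteq N_s$ via the stem choice. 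The resulting $p$ satisfies $[p] \subseteq A$.

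The only substantive point is the localization of Lemma \ref{l1} to an arbitrary basic open neighborhood $N_s$, but this requires no new ideas beyond noting that the construction there is stem-flexible. Since the Mathias-triple condition concerns only the \emph{tail} behavior of $p$, fixing an arbitrary stem $t_\emptyset \supseteq s$ does not interfere with producing infinitely many Mathias triples further along in the recursion; the membership $p \in \poset{MV}$ is thus preserved.
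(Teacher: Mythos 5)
Your proof is correct and matches the paper's argument essentially verbatim: split on whether $A$ is meager (apply Lemma \ref{l1} to the complement) or comeager in some $N_s$ (re-run the Lemma \ref{l1} construction with $t_\emptyset \supseteq s$). The extra remarks about why fixing the stem leaves the Mathias-triple condition intact are a reasonable added justification but not a different route.
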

\begin{proof}
The proof is a simple application of Lemma \ref{l1} and the fact that any set satisfying the Baire property is either meager or comeager relative to some basic open set $N_t$. 
Indeed, if $A$ is meager, then we apply Lemma \ref{l1} to the complement of $A$ and find $p \in \poset{MV}$ such that $[p] \cap A= \emptyset$.
If there exists $t \in 2^{<\w}$ such that $A$ is comeager in $N_t$, then we can use the construction as in Lemma \ref{l1} in order to find $p \in \poset{MV}$ such that $[p] \subseteq A$, simply by choosing $t_{\emptyset} \supseteq t$, $t_{\emptyset} \in D_0$ and then use the same construction as in the proof of Lemma \ref{l1}.
\end{proof}

\subsection{Infinite Pareto and Anonymity}\label{s4.1}
Given $x \in 2^\w$, let $U(x):= \{ n \in \w: x(n)=1 \}$ and $\{ n^x_k: k \geq 1 \}$ enumerate the numbers in $U(x)$.  
Define
\begin{equation}
\begin{split}
o(x):= [n^x_1,n^x_2) \cup [n^x_3,n^x_4) \cdots [n^x_{2j+1}, n^x_{2j+2}) \cup \cdots \\
e(x):= [n^x_2,n^x_3) \cup [n^x_4,n^x_5) \cdots [n^x_{2j+2}, n^x_{2j+3}) \cup \cdots
\end{split}
\end{equation}
As usual we identify subsets of $\w$ with their characteristic functions, so that we can write $o(x)$, $e(x) \in 2^\w$.

\begin{proposition}\label{P-ip} 
Let $\precsim$ denote a total SWR satisfying IP and A on $X= 2^{\w}$.
Then there exists a subset of $X$ which is not $\poset{MV}$-measurable.
\end{proposition}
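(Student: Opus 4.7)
The plan is to define
\[
A := \{x \in 2^\omega : o(x) \succ e(x)\}
\]
and to show that $A$ is not $\poset{MV}$-measurable. By totality of $\precsim$ the complement of $A$ is $\{x : e(x) \succsim o(x)\}$, so it suffices to argue, for every $p \in \poset{MV}$, that neither $[p] \subseteq A$ nor $[p] \cap A = \emptyset$ can hold. The two cases are symmetric (interchanging the roles of $o$ and $e$); we treat the first.

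The main technical tool is the prefix-XOR description
\[
o(x)(i) = x(0) \oplus x(1) \oplus \cdots \oplus x(i),
\]
together with $e(x)(i) = 1 - o(x)(i)$ valid for $i \geq n^x_1$. Flipping a single coordinate from $x(m) = 0$ to $y(m) = 1$ therefore toggles $o$ on $[m, \infty)$, and a short calculation yields
\[
o(y) \restric [m,\infty) = e(x) \restric [m,\infty), \qquad e(y) \restric [m,\infty) = o(x) \restric [m,\infty),
\]
the residual disagreement between $o(y)$ and $e(x)$ (and between $e(y)$ and $o(x)$) being confined to the prefix $[n^x_1, m)$, on which they are bitwise complementary. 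The Mathias triple structure of $p$ enters here: choosing a Mathias triple $(a_{j^*}, b_{j^*}, c_{j^*})$ at which $x$ takes pattern $000$ (possible because the three splits are independent and the positions strictly between them are forced to $0$), the flip at $m \in \{a_{j^*}, b_{j^*}, c_{j^*}\}$ keeps $y$ in $[p]$.

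Under the hypothesis $[p] \subseteq A$ we have $o(x) \succ e(x)$ and $o(y) \succ e(y)$. The crux is to upgrade the tail equalities to full A-equivalences $o(y) \sim e(x)$ and $e(y) \sim o(x)$; since the residual disagreement is finite, this reduces to matching the $1$-counts on the window $[n^x_1, m)$. Using the infinitely many Mathias triples of varying positions and parities, together with the forced-zero gaps inside each triple that pin down the local value of $o(x)$, one can select $x$ and $m$ (if necessary combining with an auxiliary Anonymity-neutral swap at a second triple) so that the $1$-counts balance and Anonymity applies. Chaining the resulting equivalences with the strict comparisons from the assumption yields the cycle
\[
o(x) \succ e(x) \sim o(y) \succ e(y) \sim o(x),
\]
contradicting the irreflexivity of $\succ$.

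The main obstacle is exactly this balancing of $1$-counts on the prefix window: a single bit-flip typically produces unequal counts, while modifying infinitely many bits would destroy the clean tail-swap and place us outside the reach of Anonymity. The Mathias-Silver tree definition — requiring infinitely many triples $(a_j, b_j, c_j)$ with $c_j - a_j \geq 4$ and with forced zeros strictly between the three splits — is precisely what provides enough parity flexibility and enough local control over the prefix XOR of $x$ to make the balancing possible; it is the technical device that lets the present argument thread the needle between using too few flips (losing IP-comparability) and using too many (losing A-comparability), which is why the paper works with Mathias-Silver trees instead of ordinary Silver trees.
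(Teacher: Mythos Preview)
Your proposal has a genuine gap: Infinite Pareto is never actually invoked, yet it is essential. Observe that the trivial total SWR declaring $u \sim v$ for all $u,v$ satisfies Anonymity and totality; for this relation your set $A=\{x:o(x)\succ e(x)\}$ is empty and hence $\poset{MV}$-measurable, so any argument avoiding IP must break down somewhere. Yours breaks at the claimed symmetry between $[p]\subseteq A$ and $[p]\cap A=\emptyset$: the latter yields only $e(x)\succsim o(x)$ for all $x\in[p]$, and if every such comparison is an equivalence your cycle $o(x)\succ e(x)\sim o(y)\succ e(y)\sim o(x)$ degenerates to a chain of $\sim$'s with no contradiction. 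The paper confronts exactly this obstacle by a three-way case split on a specific $x$, and in the case $e(x)\sim o(x)$ it is IP that manufactures a $z\in[p]$ with $e(z)\prec o(z)$.

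The paper's construction is also structurally different from yours. Rather than a single bit flip followed by a delicate Anonymity balancing, it fixes the branch $x\in[p]$ taking the value $1$ at every level in $S(p)\cup U(p)$ and then modifies $x$ at \emph{infinitely many} Mathias-triple splitting levels to obtain $z\in[p]$. The resulting $o(z),e(z)$ then dominate (or are dominated by) $e(x),o(x)$ on an infinite set in a controlled way; a finite permutation realigns a bounded initial segment, after which IP (not A) supplies the strict inequality across the infinite tail. Your prefix-XOR/tail-swap observation is correct and pleasant, but the plan of securing both strict comparisons from the hypothesis and both $\sim$'s from Anonymity alone cannot cover all cases, and the ``balancing of $1$-counts'' you leave as a sketch is precisely the place where the argument must instead lean on IP.
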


Note that Proposition \ref{P-ip} somehow improves the result in part(b) of Proposition \ref{prop:non-BP}, as SP is stronger than IP and by Corollary \ref{C1} any non-$\poset{MV}$-measurable set is non-Baire too. However, Proposition  \ref{prop:non-BP} is still relevant as it reveals some topological structural properties that cannot be deduced from Proposition \ref{P-ip}; for instance, part(a) of Proposition \ref{prop:non-BP} shows that any SWR satisfying the Baire property must have comeager many incompatible or equivalent pairs, which essentially means that any \emph{regular} SWR is necessarily rather coarse as it has \emph{many} either incomparable or indistinguishable pairs, and the non-Baire set built in part(b) shows how the irregularity of total SWRs is intrinsically connected to the characteristic of excluding the presence of too many incompatible elements. 

\begin{proof}[Proof of Proposition \ref{P-ip}]
Let $\Gamma:= \left\{ x \in 2^\w: e(x) \prec o(x) \right\}$.
We show $\Gamma$ is not $\poset{MV}$-measurable.
Given any $p \in \poset{MV}$, let $\{n_k: k \geq 1 \}$ enumerate all natural numbers in $S(p) \cup U(p)$ (note that in the enumeration of the $n_k$'s we drop the index $p$ for making the notation less cumbersome, since the tree $p$ we refer to is fixed). 
To prove our claim, we aim to find $x$, $z \in [p]$ such that $x \in \Gamma \ifif z \notin \Gamma$. 
We pick $x \in [p]$ such that for all $n_k \in S(p) \cup U(p)$, $x(n_k)=1$, i.e. for every $k \geq 1$, $n^x_k=n_k$. 
Let $\left\{\left(n_{m_j}, n_{m_{j}+1}, n_{m_{j}+2}\right): j \geq 1 \right\}$ be an enumeration of all Mathias triples in $p$.  
We need to consider three cases.
\begin{itemize}
\item Case $e(x) \prec o(x)$: 
We remove $n_{m_1+1}$, $n_{m_j}$, $n_{m_j+1}$, for all $j>1$ from $U(x)$ to obtain $z \in 2^\w$ as follows: 
\[
z(n) := \Big \{ 
\begin{array}{ll}
x(n) & \text{if $n\notin \left\{n_{m_1+1}, n_{m_j}, n_{m_j+1}:  j>1\right\}$}\\
0 &  \text{otherwise}.
\end{array}
\]
Note that $z \in [p]$, since $n_{m_1+1}, n_{m_j}, n_{m_{j}+1}$ are all in $S(p)$. Let
\[
\begin{split}
O(m_1):=& [n_1,n_2) \cup [n_3,n_4) \cdots [n_{m_1-1}, n_{m_1}),\\
E(m_1):=& [n_2,n_3) \cup [n_4,n_5) \cdots [n_{m_1}, n_{m_1+1}).
\end{split}
\]
Let $\{k_1, k_2, \cdots, k_M\}$ enumerate the elements in $O(m_1)$, and let $\{k^1, \cdots, k^M\}$ enumerate the initial $M$ elements of the infinite set $\underset{j>1}{\bigcup}[n_{m_j}, n_{m_j+1})$.
We permute $e(z)(k_1)$ with  $e(z)(k^1)$, $e(z)(k_2)$ with  $e(z)(k^2)$, continuing likewise till $e(z)(k_M)$ with  $e(z)(k^M)$ to obtain $e^{\pi}(z)$.
Further, $o^{\pi}(z)$ is obtained by carrying out identical permutation on $o(z)$.
Observe that $e^{\pi}(z)$ and $o^{\pi}(z)$ are finite permutations of $e(z)$ and $o(z)$ respectively.
Then,
\begin{itemize}
\item[-]{for all $n\in O(m_1)$,  $e^{\pi}(z)(n) = 1 = o(x)(n)$ and $o^{\pi}(z)(n) = 0 = e(x)(n)$,}
\item[-]{for all $n\in E(m_1)$,  $e^{\pi}(z)(n) = 1 > 0 = o(x)(n)$ and $o^{\pi}(z)(n) = 0 < 1 = e(x)(n)$,}
\item[-]{for all $n\in \underset{j>1}{\bigcup} [n_{m_j}, n_{m_j+1})\setminus \{k^1, \cdots, k^M\}$,  $e^{\pi}(z)(n) = 1 > 0 = o(x)(n)$ and  $o^{\pi}(z)(n) = 0 <1 = e(x)(n)$,}
\item[-]{for $n\in \{k^1, \cdots, k^M\}$,  $e^{\pi}(z)(n) = 0= o(x)(n)$ and $o^{\pi}(z)(n) = 1= e(x)(n)$, and }
\item[-]{for all remaining  $n\in \w$,  $e^{\pi}(z)(n) = o(x)(n)$ and $o^{\pi}(z)(n) = e(x)(n)$.}
\end{itemize}
Observe that A implies
$e^{\pi}(z)\sim e(z)\; \text{and}\; o^{\pi}(z)\sim o(z)$
and IP implies 
$o(x) \prec e^{\pi}(z) \; \text{and}\; o^{\pi}(z) \prec e(x)$.
Combining them with transitivity, we thus get
$o(z) \sim o^{\pi}(z)\prec e(x) \prec o(x) \prec e^{\pi}(z) \sim e(z)$ and so  $o(z) \prec e(z)$,
which implies $z\notin \Gamma$.

\vspace{2mm}
\item Case $o(x) \prec e(x)$: the argument is similar to the above case and we just need to arrange the details accordingly. We remove $n_{m_1}$, $n_{m_j+1}$, $n_{m_j+2}$, for all $j>1$ from $U(x)$ to obtain $z \in 2^\w$ as follows:
\[
z(n) := \Big \{ 
\begin{array}{ll}
x(n) & \text{if $n\notin \left\{n_{m_1}, n_{m_j+1}, n_{m_j+2}:  j>1\right\}$}\\
0 &  \text{otherwise}.
\end{array}
\]
Let
\[
\begin{split}
O(m_1):=& [n_1,n_2) \cup [n_3,n_4) \cdots [n_{m_1-1}, n_{m_1}),  \\
E(m_1):=& [n_2,n_3) \cup [n_4,n_5) \cdots [n_{m_1-2}, n_{m_1-1}).
\end{split}
\]
(In case $m_1=2$ put $E(m_1)=\emptyset$.)
Let $\{k_1, k_2, \cdots, k_M\}$ enumerate the elements in $E(m_1)$, and let $\{k^1, \cdots, k^M\}$ enumerate the initial $M$ elements of the infinite set $\underset{j>1}{\bigcup}[n_{m_j+1}, n_{m_j+2})$.
We permute $e(z)(k_1)$ with  $e(z)(k^1)$, $e(z)(k_2)$ with  $e(z)(k^2)$, continuing likewise till $e(z)(k_M)$ with  $e(z)(k^M)$ to obtain $e^{\pi}(z)$.
Further, $o^{\pi}(z)$ is obtained by carrying out identical permutation on $o(z)$.
Observe that $e^{\pi}(z)$ and $o^{\pi}(z)$ are finite permutations of $e(z)$ and $o(z)$, respectively.
Then, by arguing as in the previous case, one can
observe that A implies
$e^{\pi}(z)\sim e(z)\; \text{and}\; o^{\pi}(z)\sim o(z)$
and IP gives
$e^{\pi}(z) \prec o(x) \; \text{and}\; e(x)  \prec o^{\pi}(z)$. 
Combining them we obtain
$e(z) \sim e^{\pi}(z) \prec o(x)  \prec e(x) \prec o^{\pi}(z)\sim o(z)$ and so $e(z) \prec o(z)$,
which implies $z\in \Gamma$.

\vspace{2mm}

\item Case $e(x) \sim o(x)$: We remove $n_{m_j}$, $n_{m_j+1}$, for all $j>1$ from $U(x)$ to obtain $z \in 2^\w$ as follows:
\[
z(n) = \Big \{ 
\begin{array}{ll}
x(n) & \text{if $n\notin \left\{n_{m_j}, n_{m_j+1}: j>1\right\}$}\\
0 &  \text{otherwise}.
\end{array}
\]
By construction we obtain $o(z)(n) \geq o(x)(n)$ and $e(z)(n) \leq e(x)(n)$ for all $n\in \w$.
Further, for all $n \in \underset{j\in \w}{\bigcup} \left[n_{m_j}, n_{m_{j}+1}\right)$, $o(z)(n) = 1 > 0 = o(x)(n)$ and $e(z)(n) = 0 < 1 = e(x)(n)$.
Hence, by IP, we get 
$o(x) \prec o(z) \; \text{and}\; e(z) \prec e(x)$,
and so by transitivity it follows
$e(z) \prec  o(z)$, which implies $z\in \Gamma$.
\end{itemize}

\end{proof}

\section{Welfare-regularity Diagram} \label{section2}

The results proved in the previous section, together with the results mentioned in the introduction already discovered by Lauwers and Zame, yield to the following \emph{Welfare-Regularity Diagram} (\emph{WR-diagram}), which essentially represents the fragments of AC corresponding to the non-constructive sets involved in our investigation. Since the utility domain $Y=\{ 0,1 \}$ is fixed, throughout this section, we simply write $\SP$ ($\IP$ resp.) instead of $\SP_{\{0,1\}}$ ($\IP_{\{ 0,1 \}}$ resp.).

\begin{center}
\begin{tikzpicture}[-, auto, node distance=3.0cm, thick]
\tikzstyle{every state}=[fill=white, draw=none, text=black]
\node[state](U){\U};
\node[state](SP)[right of=U]{\SP};
\node[state](IP)[right of=SP, node distance=2.0cm]{\IP};
\node[state] (NL) [above of=SP]{\NL};  
\node[state] (NB) [above of=IP]{\NB}; 
\node[state](NR)[right of=IP]{\NR};

\path 	(U) edge (SP)
         	(SP) edge  (IP)
		(SP) edge (NL)    
         	(IP) edge (NB)
		(IP) edge (NR)
		;
\end{tikzpicture}
\end{center}

This WR-diagram mimics other popular diagrams in set theory of the reals (like Cicho\'n's diagram) and it should be understood similarly; moving left-to-right or bottom-up means moving from a stronger to a weaker statement (in terms of ZF-implications). As for Cicho\'n's diagram, we want to consider combinations of $\square$'s and $\blacksquare$'s using the following convention: $\square$ means that the corresponding statement is true, $\blacksquare$ means that the corresponding statement is false. 
It is then interesting to understand whether the various ZF-implications do not reverse and, more generally, if provided a combination of $\blacksquare$'s and $\square$'s, one can find the suitable model satisfying such a given combination.

\subsection{A model for $\IP=\blacksquare$, $\NB=\square$, $\NL=\square$, $\NR=\square$}

The proof uses some idea from the previous section on Mathias-Silver trees together with the proof-method used in \cite[Proposition 3.7]{BLH2005}. 

\begin{lemma} \label{L2}
Let $c$ be a Cohen generic real. Then 
\[
V[c] \models \exists q \in \poset{MV} \forall z \in [q] (z \text{ is a Cohen real}).
\]
\end{lemma}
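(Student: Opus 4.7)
My plan is to adapt the construction from \cite[Proposition 3.7]{BLH2005}, where the analogous result is proved for ordinary Silver trees, to the Mathias-Silver setting. The goal is to produce a $\cohen$-name $\dot{q}$ for a Mathias-Silver tree such that, in $V[c]$, every branch of $q = \dot{q}^{c}$ is Cohen generic over $V$. The key technical property to establish is the following density statement: for every $V$-dense open set $D \subseteq 2^{<\omega}$ and every Cohen condition $s \in \cohen$, there exists $s' \leq s$ with $s' \force$ ``every branch of $\dot{q}$ meets $D$''. Once this is established for all $V$-dense opens $D$, a standard forcing argument yields that, in $V[c]$, every branch of $q$ meets every $V$-dense open, hence every branch is Cohen generic over $V$.

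For the construction of $\dot{q}$ itself, I would use a $\dot{c}$-driven block-by-block scheme. The idea is to locate ``long runs of zeros'' in $\dot{c}$ and place Mathias triples at the beginnings of such runs, while at all other positions $\dot{q}$ is forced to take the value $\dot{c}(n)$. This has the pleasant feature that the distinguished ``all-zero at split positions'' branch of $\dot{q}$ coincides with $\dot{c}$ itself and is therefore automatically Cohen generic over $V$. Using the genericity of $\dot{c}$, a direct combinatorial check shows that infinitely many Mathias triples appear (the parity constraint on $m_j$ is satisfied by roughly half of them, which is sufficient), so $\cohen \force \dot{q} \in \poset{MV}$.

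The heart of the argument is the density step for arbitrary branches, not merely the distinguished one. Given a $V$-dense open $D$ and a condition $s$, I would pick a stage $k$ with the $k$-th block $I_{k}$ starting beyond $|s|$ and extend $s$ to $s'$ by deciding the Cohen coding values within $I_{k}$ so as to funnel each of the finitely many partial branches of $\dot{q}$ through $I_{k}$ into $D$. The extension is built sequentially: enumerate the partial branches, and for each one use the density of $D$ to lengthen the already-decided portion of $\dot{c}$ so that the current partial branch's initial segment lands in $D$. Openness of $D$ guarantees that progress made for earlier partial branches is preserved by later extensions; sufficiently fast growth of $|I_k|$ accommodates all partial branches together.

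The main obstacle is precisely this simultaneous handling of exponentially many partial branches via a single coding extension. The construction must arrange that the Mathias triple's forced-zero-gap structure is preserved throughout the bookkeeping, and that the coding flexibility within $I_k$ always suffices for the given $V$-dense open $D$. The detailed combinatorial argument follows the template of \cite[Proposition 3.7]{BLH2005} but with the extra parity and gap-preservation constraints imposed by the Mathias condition, which introduce additional bookkeeping but no essentially new ideas.
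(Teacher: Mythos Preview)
Your plan is on the right track and, like the paper, follows the template of \cite[Proposition 3.7]{BLH2005}. The paper, however, takes a cleaner route that avoids your explicit coding altogether. Rather than manufacturing a $\cohen$-name $\dot q$ as a function of the Cohen real via a ``long runs of zeros'' scheme, the paper works directly with the countable poset $\poset{F}$ of finite \emph{uniform} trees (finite Silver-tree approximations) ordered by end-extension. Since $\poset{F}$ is countable and nontrivial it is forcing-equivalent to $\cohen$, and the generic filter immediately yields the tree $q_G=\bigcup G$. The density step is then a one-shot version of Lemma~\ref{l1}: given $F\in\poset{F}$ and a ground-model dense open $D$, one uniformly extends all terminal nodes of $F$ into $D$ (iterating through them exactly as in Lemma~\ref{l1}) and then appends the eight fixed sequences $s_1,\dots,s_8$ to guarantee a Mathias triple at that stage.

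This buys two simplifications over your scheme. First, there is no need for pre-fixed blocks $I_k$ whose size must be ``large enough'' for an unknown $D$; the extension $r$ in the $\poset{F}$-picture may be as long as $D$ demands. Second, and more importantly, your split-placement mechanism (zero runs in $c$) is entangled with the branch-value mechanism (the remaining bits of $c$): the density extension you build to push the current partial branches into $D$ may itself contain long zero runs, thereby creating new splitting levels mid-argument and changing the set of partial branches you are trying to control. You have not said how you prevent this, and ``sufficiently fast growth of $|I_k|$'' does not address it. In the paper's $\poset{F}$-formulation the two mechanisms are decoupled by fiat --- the condition itself records which levels split --- so the issue never arises. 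Your approach can be repaired (e.g.\ by coding splits and values on disjoint sets of coordinates), but the poset formulation makes the whole thing a two-line argument.
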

\begin{proof}
It follows the same idea as in the proof of Lemma \ref{l1}. Consider the poset $\poset{F}$ consisting of all  $F \subseteq 2^{<\omega}$ finite uniform trees, i.e., such that:
\begin{itemize}
\item all terminal nodes of $F$ have the same length;
\item $\forall s,t \in F \forall i \in \{ 0,1 \}(|s|=|t| \Rightarrow (s^\conc i \in F \ifif t^\conc i \in F))$.
\end{itemize} 
$\poset{F}$ is ordered by end-extension: $F' \leq F$ iff $F' \supseteq F$ and for all $t \in F' \setminus F$ there is $s \in \term(F)$ such that $s \subseteq t$.
Since $\poset{F}$ is countable (and non-trivial), it is equivalent to $\cohen$. 

Let $G$ be $\poset{F}$-generic over $V$ and put $q_G:= \bigcup G$.
We claim that $q_G \in \poset{MV}$ and every of its branch is Cohen over $V$.
To show that it is sufficient to prove that given any $F \in \poset{F}$ and $D \subseteq \cohen$ open dense from the ground model $V$, 
there exists $F' \leq F$ such that 
\(
F' \force \forall t \in \term(F') (t \in D).
\)
It is easy to see that one can use the same argument as in the proof of Lemma \ref{l1} (actually, in this case one step being sufficient and no need of $\omega$-many steps), by using the same eight sequences $s_1, s_2, \dots, s_8$ in order to make sure that $q_G \in \poset{MV}$ and then uniformly extend the nodes in order to get that for all $t \in \term(F')$, $t \in D$.   
\end{proof}

\begin{proposition}
Let $\cohen_{\omega_1}$ be an $\omega_1$-product of $\cohen$ with finite support and $G$ be $\cohen_{\omega_1}$-generic over $L$. Then
\[
L(\mathbb{R})^{L[G]} \models \neg \mathbf{IPA} \land \mathbf{NB} \land \mathbf{NL} \land \mathbf{NR}.
\]
\end{proposition}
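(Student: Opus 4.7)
The plan is to prove the two parts of the proposition separately, with $\neg \IP$ being the main novelty and the other three statements following from the standard theory of the $\omega_1$-Cohen model. The overall strategy mirrors the classical Solovay--Feferman approach, using Lemma \ref{L2} in place of the usual perfect-tree-of-Cohen-reals fact and Proposition \ref{P-ip} to reduce $\IP$ to $\poset{MV}$-measurability.

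For $\neg \IP$ it suffices, by Proposition \ref{P-ip}, to show that every set $A \subseteq 2^\omega$ in $L(\mathbb{R})^{L[G]}$ is $\poset{MV}$-measurable. Any such $A$ is definable in $L(\mathbb{R})^{L[G]}$ from a real parameter $s$ and ordinal parameters; since $G$ has finite support, $s$ belongs to $L[G \restric I]$ for some countable $I \subseteq \omega_1$. Picking any $\beta \in \omega_1 \setminus I$, the real $c_\beta$ is Cohen-generic over $L[G \restric I]$, and Lemma \ref{L2} applied inside $L[G \restric I][c_\beta]$ yields $q \in \poset{MV}$ whose branches are Cohen-generic over $L[G \restric I]$. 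By inspecting the proof of Lemma \ref{L2}, this genericity of branches actually persists in $L[G]$, because the construction drives every branch through each open dense subset of $\cohen$ coded in $L[G \restric I]$. A standard homogeneity argument for $\cohen_{\omega_1 \setminus I}$, via coordinate permutations fixing $L[G \restric I]$ (and hence $s$), then shows that membership in $A$ is uniform on the Cohen-generic branches of $q$, yielding $[q] \subseteq A$ or $[q] \cap A = \emptyset$.

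For $\NB \land \NL \land \NR$ we invoke the well-known fact that in the $\omega_1$-Cohen extension of $L$ the $\Sigma^1_2$ set $\mathbb{R}^L$, definable in $L(\mathbb{R})^{L[G]}$ via the canonical wellorder of $L$, simultaneously witnesses the failure of the Baire property, Lebesgue measurability, and the Ramsey property. This is a standard consequence of the absence of an inaccessible cardinal in $L$, going back to the work of Raisonnier, Bukovsk\'y, Shelah and others.

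The main obstacle will be justifying the homogeneity step rigorously. One must verify that for any branch $x \in [q] \cap L[G]$ (which may in principle be constructed using coordinates of $G$ outside $I \cup \{\beta\}$) the truth value of the defining formula $\varphi(x,s,\alpha)$ in $L(\mathbb{R})^{L[G]}$ depends only on the Cohen-generic type of $x$ over $L[G \restric I]$. This reduces to the symmetry properties of $\cohen_{\omega_1}$-generics and to standard absoluteness between $L(\mathbb{R})^{L[G]}$ and its intermediate inner models, but some care is required to make the symmetric-submodel picture fit with the specific Mathias-Silver tree produced by Lemma \ref{L2}.
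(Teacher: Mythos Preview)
Your treatment of $\neg \IP$ is correct and essentially identical to the paper's: reduce via Proposition \ref{P-ip} to $\poset{MV}$-measurability of every set in $L(\mathbb{R})^{L[G]}$, then run a Solovay-style factoring argument using Lemma \ref{L2} and the strong homogeneity of $\cohen_{\omega_1}$. The paper phrases the homogeneity step as a factoring lemma rather than as a symmetry argument, but the content is the same.

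The gap is in your argument for $\NB \land \NL \land \NR$. The set $\mathbb{R}^L$ does \emph{not} witness any of the three properties in the Cohen extension. A single Cohen real $c$ already makes $2^\omega \cap L$ both meager and null: reading $c$ as a generic sequence $(t_n)_{n\in\omega}$ with $t_n \in 2^n$, the set $\bigcap_m \bigcup_{n\geq m} [t_n]$ is a null $G_\delta$ which, by a density argument, contains every ground-model real; the meagerness is the classical fact. So in $L[G]$ the set $\mathbb{R}^L$ is Lebesgue measurable and has the Baire property. Likewise $[\omega]^\omega \cap L$ is Ramsey-null: given any $A \in [\omega]^\omega \cap L[G]$, either $A$ already has no infinite subset in $L$, or one passes to an infinite $A'\subseteq A$ with $A'\in L$ and then $B:=\{n\in A': c(n)=1\}$ is an infinite subset with $[B]^\omega \cap L=\emptyset$ (again by density, no infinite $D\in L$ can satisfy $c\restric D \equiv 1$). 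Your appeal to Raisonnier--Shelah would rescue $\NL$ (that argument builds a non-measurable set \emph{from} an $\omega_1$-sequence of reals, not $\mathbb{R}^L$ itself), but there is no analogous inaccessible-free lower bound for the Baire or Ramsey properties, so the same reasoning cannot yield $\NB$ or $\NR$.

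The paper instead obtains $\NB$, $\NL$, $\NR$ via the Brendle--L\"owe characterizations together with preservation facts for finite-support Cohen iterations: $\cohen_{\omega_1}$ adds no random reals (giving a $\DDelta^1_2$ non-Lebesgue set), adds no dominating reals (giving a $\DDelta^1_2$ non-Laver, hence non-Ramsey, set), and fails to make the Cohen reals over any $L[r]$ comeager (giving a $\SSigma^1_2$ non-Baire set). You should replace the $\mathbb{R}^L$ paragraph with these three preservation-and-characterization facts.
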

\begin{proof}
It follows the same argument as in the proof of  \cite[Proposition 3.7]{BLH2005}; we give here the proof for completeness and arrange some details according to our setting.  We aim to show that in $L[G]$ for every $\On^\omega$-definable set $X \subseteq 2^\omega$ there exists $q \in \poset{MV}$ such that $[q] \subseteq X$ or $[q] \cap X = \emptyset$. The key idea is that since Cohen forcing adds a generic Mathias-Silver tree of Cohen branches and Cohen forcing is strongly homogeneous, we thus have a factoring lemma \emph{\'a la Solovay}. More precisely, we can argue as follows. Given $X:=\{x \in 2^\omega: \varphi(x,v)  \}$ with $\varphi$ formula and $v \in \On^\omega$, we can use standard argument to absorb $v$ into the ground model, i.e., pick $\alpha<\omega_1$ such that $v \in L[G\restric \alpha]$. Let $\Phi(x,v)$ be the formula asserting that $\force_{\cohen_{\omega_1}} \varphi(x,v)$.

By strong homogeneity of $\cohen_{\omega_1}$, one has the following factoring Lemma: for every $x \in 2^\omega \cap L[G]$, there exists a $\cohen_{\omega_1}$-generic filter $H$ over $L[G \restric \alpha][x]$ such that $L[G]=L[G\restric \alpha][x][H]$.

Then Lemma \ref{L2} gives $q \in L[G \restric \alpha+1]$ such that all $x \in [p]^{L[G]}$ are Cohen over $L[G \restric \alpha]$; moreover note that an easy refinement of the proof-argument indeed shows that we can arbitrarily pick $\stem(q)$ being any $t \in 2^{<\omega}$. Finally, the latter combined with the factoring lemma, gives: for all  $x \in 2^\omega \cap L[G]$, if $x$ is Cohen over $L[G \restric \alpha]$, then 
\[
L[G \restric \alpha][x] \models \Phi(x,v)  \quad \Leftrightarrow \quad L[G] \models \varphi(x,v).
\]
Since $q$ only consists of Cohen branches, and by homogeneity of $\cohen$, we thus obtain 
\[
L[G] \models \forall x \in 2^\omega (x \in [q] \Rightarrow \varphi(x,v)) \quad \text{or} \quad L[G] \models \forall x \in 2^\omega (x \in [q] \Rightarrow \neg \varphi(x,v)).
\]
Finally, from various characterizations proved in \cite{BL99} and some known preservation theorems, it follows that in $L[G]$:
\begin{itemize}
\item there exists a $\SSigma^1_2$ non-Baire set, as $\cohen_{\omega_1}$ does not add a comeager set of Cohen reals (\cite[Theorem 5.8]{BL99} and \cite[Lemma 6.5.3, p. 313]{BJ1995});
\item there exists a $\DDelta^1_2$ non-Ramsey set, as $\cohen_{\omega_1}$ does not add dominating reals  (\cite[Theorem 4.1]{BL99} and \cite[Lemma 6.5.3, p. 313]{BJ1995}: note that a non-Laver measurable set is a special case of a non-Ramsey set);
\item there exists a $\DDelta^1_2$ non-Lebesgue set, as $\cohen_{\omega_1}$ does not add random reals (\cite[Theorem 6.5.28, p. 322]{BJ1995} and \cite[Theorem 9.2.1, p. 452]{BJ1995}). 
\end{itemize}  
Hence, passing into the inner model $L(\mathbb{R})$ of $L[G]$ we have $\neg \IP \land \NB \land \NL \land \NR$. 
\end{proof}

In particular, it follows $L[G]$ is a model for the diagram 

\begin{center}
\begin{tikzpicture}[-, auto, node distance=3.0cm, thick]
\tikzstyle{every state}=[fill=white, draw=none, text=black]
\node[state](U){$\blacksquare$};
\node[state](SP)[right of=U]{$\blacksquare$};
\node[state](IP)[right of=SP, node distance=2.0cm]{$\blacksquare$};
\node[state] (NL) [above of=SP]{$\square$};  
\node[state] (NB) [above of=IP]{$\square$}; 
\node[state](NR)[right of=IP]{$\square$};

\path 	(U) edge (SP)
         	(SP) edge  (IP)
		(SP) edge (NL)    
         	(IP) edge (NB)
		(IP) edge (NR)
		;
\end{tikzpicture}
\end{center}

\subsection{A model for $\NL=\blacksquare$, $\NR=\square$} 
We recall the following well-known forcing notions, which we use through this section.
\begin{itemize}
\item Random forcing $\random:=\{ C \subseteq 2^{\omega}: C \text{ closed } \land \mu(C) > 0 \}$, where $\mu$ is the standard Lebesgue measure on $2^\omega$. The order is given by: $C' \leq C \ifif C' \subseteq C$.
\item Mathias forcing $\mathias$ consisting of pairs $(s,x)$ such that $x \in [\omega]^\omega$, $s \in [\omega]^{<\omega}$ and $\max s <  \min x$, ordered by $(t,y) \leq (s,x)$ iff $t \supseteq s$, $t \restric |s| = s$ and $y \subseteq x$. Moreover we denote 
\[
[s,x]:= \{y \in [\omega]^\omega: y \supset s \land y \restric |s|= s \land y \subseteq x  \}.
\]
\item Given $\kappa > \omega$ cardinal, let 
\[
\mathsf{Fn}(\omega,\kappa):= \{ f: f \text{ is a function} \land |\dom(f)| < \omega \land \dom(f) \subseteq \omega \land \text{ran} (f) \subseteq \kappa  \}, 
\]
ordered by: $f' \leq f \ifif f' \supseteq f$. Note $\mathsf{Fn}(\omega,\kappa)$ is the standard poset adding a surjection $f_G: \omega \rightarrow \kappa$, i.e., the forcing collapsing $\kappa$ to $\omega$. 
\end{itemize}

\begin{theorem} \label{thm1}
There is a ZF-model $N$ such that
\[
N \models  \nr \land \neg \nl.
\]
\end{theorem}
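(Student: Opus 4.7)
The plan is to construct a Solovay-style symmetric submodel built over random forcing instead of the L\'evy collapse: random forcing preserves enough homogeneity and factoring to propagate Lebesgue measurability in Solovay's style, and simultaneously is $\omega^\omega$-bounding, which blocks Mathias reals and hence the $\SSigma^1_2$-Ramsey property. Concretely, I would take $\kappa$ an inaccessible cardinal in $L$, let $\random_\kappa$ denote the measure algebra on $2^\kappa$, let $G$ be $\random_\kappa$-generic over $L$, and set $N := L(\mathbb{R})^{L[G]}$. The goal is then to check that $N \models \neg \nl$ and $N \models \nr$.

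For $\neg \nl$, I would adapt Solovay's classical proof to the random setting. The needed ingredients are: (a) since $\random_\kappa$ is ccc, every real in $L[G]$ already lies in an intermediate extension $L[G \restric \alpha]$ with $\alpha < \kappa$; (b) $\random_\kappa$ factors as $\random_\alpha \ast \random_\kappa$ with the upper factor homogeneous; and (c) the real added by the upper factor over $L[G \restric \alpha]$ is Lebesgue-generic. Combining these with an absorption of the ordinal parameter defining a set $X \subseteq 2^\omega$ in $N$ into an intermediate $L[G \restric \alpha]$, one obtains that $X$ differs from a Borel set by a null set and hence is Lebesgue measurable in $N$.

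For $\nr$, the key observation is that $\random_\kappa$ is $\omega^\omega$-bounding and therefore adds no dominating reals. Since the enumeration function of any Mathias real is dominating over the ground model, no real of $L[G]$ is Mathias-generic over $L$. By the standard characterization (see \cite{BL99} and \cite{Khomskii}), the assertion ``every $\SSigma^1_2$ subset of $[\omega]^\omega$ is Ramsey'' is equivalent to ``for every real $x$ there exists a Mathias real over $L[x]$''. Taking $x = 0$, this fails in $L[G]$, so $L[G]$ contains a $\SSigma^1_2$ non-Ramsey set; as $\SSigma^1_2$ sets lie in $L(\mathbb{R})^{L[G]} = N$, we conclude $N \models \nr$.

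The main obstacle I expect is the careful verification that Solovay's measurability argument propagates through $\random_\kappa$ to cover \emph{every} set of reals in $L(\mathbb{R})^{L[G]}$, not only the projective ones; this relies on the factoring lemma and the correct treatment of real parameters, which are standard but have to be set up explicitly. By contrast, once the absence of Mathias reals over $L$ is established from the $\omega^\omega$-bounding property, the non-Ramsey step is essentially a quotation from the characterizations recalled in \cite{BL99} and \cite{Khomskii}.
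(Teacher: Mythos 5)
Your proposal for $\nr$ is fine, but the proof of $\neg\nl$ has a genuine gap. Adding $\kappa$-many random reals over $L$ does \emph{not} yield Solovay-style Lebesgue measurability in $L(\mathbb{R})^{L[G]}$. The step that fails is the one you flag as needing ``careful verification'': Solovay's argument requires that, for each intermediate model $L[G\restric\alpha]$, the union of the Borel null sets coded in $L[G\restric\alpha]$ be null in $L[G]$, so that $X$ differs from a Borel set on a null (rather than merely on a non-random) set. Random forcing cannot provide this: by the Bartoszy\'nski argument underlying $\add(\nullo)\leq\mathfrak{b}$, any null set covering all $L[G\restric\alpha]$-coded null sets encodes a function dominating $L[G\restric\alpha]\cap\omega^\omega$, and random forcing, being $\omega^\omega$-bounding, adds no such function. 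So the very property you invoke to get $\nr$ (no dominating, hence no Mathias reals) is exactly what blocks the $\neg\nl$ argument. This is not a peripheral technicality; it is the central obstruction.

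The paper's proof resolves this tension by abandoning plain random forcing in favour of a recursive amalgamation construction \emph{\`a la} Shelah: an iteration of length an inaccessible $\kappa$ which interleaves (i) Shelah amalgamations over pairs of isomorphic copies of the random algebra, to obtain $(\random, Y)$-homogeneity rather than cone-homogeneity; (ii) collapsing forcings $\mathsf{Fn}(\omega,\lambda_\alpha)$, which do make the intermediate-model null sets null and gently collapse $\kappa$ to $\omega_1$; and (iii) Mathias forcings that pump Mathias reals into a designated name $Y$, closed off under the amalgamation automorphisms. The target model is then $L(\mathbb{R},\{Y\})^{V[G]}$ rather than $L(\mathbb{R})^{V[G]}$, with $Y$ itself serving as the non-Ramsey witness; the non-Ramsey property of $Y$ is verified via Lemma \ref{lemma:preserve-dominating} and its sublemmas, which show that the dominating property of the enumeration functions $f_{\dot y}$ is preserved through both amalgamation steps and the $\omega^\omega$-bounding random steps, so the Mathias reals already in $Y$ never get ``trapped'' by conditions entering $Y$ at later stages. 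None of this machinery can be replaced by the bare random algebra on $2^\kappa$, and the characterizations you quote from \cite{BL99} only give $\SSigma^1_2$ measurability, not full measurability in $L(\mathbb{R})$.
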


The model $N$ is going to be the inner model of a certain forcing extension that we are going to define in the proof of Theorem \ref{thm1} below. 
The key idea to obtain such a forcing-extension is to use Shelah's amalgamation over random forcing with respect to a certain name $Y$ for sets of elements in $2^\omega$ in order to get a complete Boolean algebra $B$ such that, if $G$ is $B$-generic over $V$, in $V[G]$ the following hold:
\begin{enumerate}
\item every subset of $2^\omega$ in $L(\real,\{ Y \})$ is Lebesgue measurable
\item $Y$ is non-Ramsey.
\end{enumerate}
Hence, we obtain that in $L(\real,\{Y\})^{V[G]}$ every subset of $2^\omega$ is Lebesgue measurable (and so by Zame's result there cannot be any total SWR satisfying A and SP), but there is a non-Ramsey set. 

Shelah's amalgamation (\cite{Sh85}) is the main tool we need for our forcing construction. Since it is a rather demanding machinery, we refer the reader to the Appendix for a more detailed approach and an exposition of the main properties.
The reader already familiar with Shelah's amalgamation can proceed with no need of such Appendix. 

Before going to the detailed and technical proof, we just give a short overview of the proof-structure.
Starting from a Boolean algebra $B$, two complete subalgebras $\mathbb{B}_0, \mathbb{B}_1 \lessdot B$ isomorphic to the random algebra with $\phi$ isomorphism between them, the amalgamation process provides us with the pair $(B^*,\phi^*)$ such that $B \lessdot B^*$ and $\phi^* \supseteq \phi$ such that $\phi^*$ is an automorphism of $B^*$. We denote this amalgamation process by $\amal^\omega(B,\phi)$, so that $B^*=\amal^\omega(B,\phi)$.

Since the process itself generates more and more copies of random algebras, we have to iterate this process as long as we treat all of the copies of such random algebras. For doing that a recursive book-keeping argument of length $\kappa$ inaccessible will be sufficient (and necessary to ensure the final construction satisfy $\kappa$-cc).

The idea to obtain 1 and 2 above is based on the following main parts:
\begin{itemize}
\item[(a)] The Boolean algebra $B$ is built via a recursive construction, alternating the amalgamation, iteration with $\mathsf{Fn}$, iteration with Mathias forcing and picking direct limits at limit steps. 
\item[(b)] The set $Y$ is also recursively built by carefully adding Mathias reals cofinally often in order to get a non-Ramsey set.
\item[(c)] In order to obtain that all sets of reals in $L(\real, \{ Y \})$ be Lebesgue measurable, we have to amalgamate over random forcing, and we also need to recursively close $Y$ under the isomorphisms between copies of the random algebras generated by the amalgamation process, in order to get $\force \phi[Y]=Y$, for every such isomorphism $\phi$.
\end{itemize}

In particular to get (c) the algebra $B$ we are going to construct is going to satisfy \emph{$(\random,Y)$-homogeneity}, i.e., for every pair of random algebras $\poset{B}_0,\poset{B}_1 \lessdot B$ with $\phi: \poset{B}_0 \rightarrow \poset{B}_1$ isomorphism, there exists $\phi^* \supseteq \phi$ automorphism of $B$ such that $\force_B \phi^*[Y]=Y$. (Roughly speaking: any isomorphism between copies of random algebra can be extended to an automorphism which fixes $Y$). See \cite[Theorem 6.2.b]{JR93} for a proof that $(\random,Y)$-homogeneity is the crucial ingredient to force that all sets in $L(\real,\{ Y \})$ are Lebesgue measurable. 

We now see the construction of the complete Boolean algebra $B$ and the proof of Theorem \ref{thm1} in detail.

\begin{proof}[Proof of Theorem \ref{thm1}]
Start from a ground model $V$ we are going to recursively define $\{B_\alpha: \alpha < \kappa  \}$ sequence of complete Boolean algebras such that $B_\alpha \lessdot B_\beta$, for $\alpha < \beta$, and $\{ Y_\alpha: \alpha <\kappa \}$ $\subseteq$-increasing sequence of sets of names for reals, and then put $B:= \lim_{\alpha < \kappa} B_\alpha$ and $Y:= \bigcup_{\alpha<\kappa} Y_\alpha$. The construction follows the line of the one presented in \cite{JR93}, even if it sensitively differs in the construction of the set $Y$and in proving that it is non-Ramsey, instead of a set without the Baire property. We also use the forcing $\mathsf{Fn}$ instead of the amoeba for measure, as it also serves the scope of collapsing the additivity of the null ideal and to ensure the inaccessible $\kappa$ be gently collapsed to $\omega_1$ in the forcing-extension via $B$.  We start with $B_0 = \{ 0 \}$ and $Y_0=\emptyset$. 

\begin{enumerate}
 \item In order to obtain the $(\random,\dot{Y})$-homogeneity we use a standard book-keeping argument to hand us down all possible situations of the following type:
if $\algebra{B}_\alpha \lessdot \algebra{B}' \lessdot \algebra{B}$ and $\algebra{B}_\alpha \lessdot \algebra{B}'' \lessdot \algebra{B}$ are such that
$\algebra{B}_\alpha$ forces $(\algebra{B}'/\algebra{B}_\alpha) \approx (\algebra{B}''/\algebra{B}_\alpha) \approx \random$
and $\phi_0: \algebraQ' \rightarrow \algebraQ''$ an isomorphism s.t. $\phi_0 {\upharpoonright} \algebra{B}_\alpha= \text{Id}_{\algebra{B}_\alpha}$, then there exists
a sequence of functions in order to extend the isomorphism $\phi_0$ to an automorphism $\phi: \algebra{B}
\rightarrow \algebra{B}$, i.e., $\exists \langle \alpha_\eta : \eta
< \kappa \rangle$ increasing, cofinal in $\kappa$, with $\alpha_0=\alpha$, and $ \exists \langle
\phi_\eta : \eta < \kappa \rangle$ such that 
\begin{itemize}
\item for $\eta >0$ successor ordinal, $\algebra{B}_{\alpha_{\eta}+1} :=\amal^\omega(\algebra{B}_{\alpha_{\eta}},\phi_{\eta-1})$,
and $\phi_\eta$ be the automorphism on  $\algebra{B}_{\alpha_\eta+1}$ generated by the amalgamation;
\item  for $\eta$ limit ordinal, let $\algebra{B}_{\alpha_\eta}:= \lim_{\xi < \eta} \algebra{B}_{\alpha_\xi}$ and $\phi_\eta= \lim_{\xi < \eta} \phi_\xi$, in the obvious sense;
\item for every $\eta< \kappa$, we have $\algebra{B}_{\alpha_{\eta}+1} \lessdot \algebra{B}_{\alpha_{\eta+1}}$.
\end{itemize}
In order to fix the set of names by each automorphism $\phi_\eta$, one then sets
\begin{itemize} 
\item successor case $\eta>0$:
\[
\begin{split}
B_{\alpha_{\eta}+1} \force Y_{\alpha_{\eta} +1} &:= Y_{\alpha_{\eta}} \cup \{
\phi^j_{\eta}(\dot{y}), \phi^{-j}_{\eta}(\dot{y}): \dot{y} \in
Y_{\alpha_{\eta}}, j \in \omega \}, \\
\end{split}
\]
\item limit case: $B_{\alpha_\eta} \force Y_{\alpha_\eta} := \bigcup_{\xi < \eta}  Y_{\alpha_\xi}$.
\end{itemize}
\item In order to get $Y$ being non-Ramsey, for cofinally many $\alpha$'s, put $\algebra{B}_{\alpha + 1}:=
\algebra{B}_\alpha * \dot{\mathias}$ and
\[
B_{\alpha+1} \force Y_{\alpha +1}:= Y_\alpha \cup \{
\dot{y}_{(s,x)}: (s,x) \in \mathias \},
\]
where $\dot{y}_{(s,x)}$ is a name for a Mathias real over
$V^{\algebra{B}_\alpha}$ such that $(s,x) \force s \subset \dot{y}_{(s,x)} \subseteq x$.
\item For cofinally many $\alpha$'s pick a cardinal $\lambda_\alpha < \kappa$ such that $B_\alpha \force \lambda_\alpha > \omega$, put $\algebra{B}_{\alpha + 1}:=
\algebra{B}_\alpha * \mathsf{Fn}(\omega, \lambda_\alpha)$, and let
$B_{\alpha+1} \force Y_{\alpha +1}:= Y_\alpha$, where $\mathsf{Fn}(\omega, \lambda_\alpha)$ is the forcing adding a surjection $F_\alpha:  \omega \rightarrow \lambda_\alpha$.
\item For any limit ordinal, put
$\algebra{B}_\lambda := \lim_{\alpha < \lambda} \algebra{B}_\alpha$ and $B_\lambda \force Y_\lambda := \bigcup_{\alpha < \lambda} Y_\alpha$.
\end{enumerate}

Let $G$ be $B$-generic over $V$.
As mentioned above, the proof of ``every set of reals in $L(\real,Y)$ is Lebesgue measurable'' is a standard Solovay-style argument, and can be found in \cite{JR93}. The only difference we adopt is the use of $\mathsf{Fn}(\omega, \lambda_\alpha)$. i.e. the poset ``collapsing" $\lambda_\alpha$ to $\omega$  instead of the amoeba for measure. The property needed for our purpose, which is to turn the union of all Borel null sets coded in the ``ground model" $V[G \restric \alpha +1]$ into a null set, is fulfilled by $\mathsf{Fn}(\omega, \lambda_\alpha)$  as well, i.e. 
\[
\mathsf{Fn}(\omega, \lambda_\alpha) \force \bigcup \{N_c: c \text{ is a Borel code for a null set in $V[G \restric \alpha +1]$}\} \text{ is null},
\]
where $N_c \subseteq 2^\omega$ is the Borel null set coded by $c$.

What is left to show is that 
\begin{equation} \label{eq-non-ramsey}
B \force \text{``$Y$ is non-Ramsey''}.
\end{equation}

For proving that, pick arbitrarily $(s,x) \in \mathias$; we have to show $$B \force Y \cap [s,x] \neq \emptyset \text{ and } [s,x] \not \subseteq Y.$$

For the former, Let $\dot{(s,x)}$ be a
$\algebra{B}$-name for a Mathias condition. By $\kappa$-cc and part (2) of the recursive construction, there is $\alpha < \kappa$
such that $\dot{(s,x)}$ is a $\algebra{B}_\alpha$-name,
$\algebra{B}_{\alpha +1}=\algebra{B}_\alpha
* \dot{\mathias}$ and
$B_{\alpha+1} \force Y_{\alpha+1}=Y_\alpha \cup \{
\dot{y}_{(s,x)}: (s,x) \in \mathias^{B_\alpha} \}$. Consider $\dot{y}_{(s,x)}$ name for a Mathias
real over $V^{B_\alpha}$ such that $B_{\alpha+1} \force \dot{y}_{(s,x)} \in [s,x]$. Thus,
\[
B \force \dot{y}_{(s,x)} \in Y \cap [s,x].
\]

On the other hand, by part (3) of the construction, there is also $\alpha < \kappa$, such that
$\dot{(s,x)}$ is a $\algebra{B}_\alpha$-name,
$\algebra{B}_{\alpha+1}= \algebra{B}_{\alpha}* \mathsf{Fn}(\omega,\lambda_\alpha)$
and $B_{\alpha+1} \force Y_{\alpha+1}=Y_\alpha$. Let
$\dot{y}$ be a $B_{\alpha+1}$-name for a Mathias real
over $V^{B_\alpha}$ such that
$B \force \dot y \in [s,x]$. Obviously, $B \force
\dot{y} \notin Y_{\alpha}$ (since ``the real $y$ is added at
stage $\alpha+1$''), and hence
\[
B \force \dot{y} \in [s,x] \setminus
Y_{\alpha+1},
\]
since
$B \force Y_{\alpha+1}=Y_{\alpha}$.
So it is left to show that also for every $\beta>\alpha+1$, $B \force y \notin Y_{\beta} \setminus Y_{\alpha}$, which means, intuitively speaking, $y$ cannot fall into $Y$ at any later stage $\beta>\alpha+1$.
 For proving that we show the following Claim \ref{claim-mathias}. Fix the notation: given $x \in 2^\omega$, we denote by $f_x$ the increasing enumeration of the set $\{ n \in \omega: x(n)=1 \}$. It is well-known (and straightforward to check) that if $x$ is a Mathias real over $V$, then $f_x$ is dominating over $V \cap \omega^\omega$.
\begin{claim} \label{claim-mathias}
 For $\beta < \kappa$, $\beta > \alpha+1$ and $\dot{y} \in Y_\beta \setminus Y_{\alpha+1}$, one has
\[
B \force \text{``}f_{\dot{y}} \text{ is dominating over } V^{B_{\alpha+1}} \cap \omega^\omega \text{''}.
\]
\end{claim}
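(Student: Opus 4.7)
My plan is to prove the claim by transfinite induction on $\beta > \alpha+1$, splitting successor stages according to which clause of the recursive construction of $\{B_\xi, Y_\xi\}$ is activated at stage $\gamma = \beta - 1$. The limit case is immediate: since $Y_\beta = \bigcup_{\xi < \beta} Y_\xi$, any $\dot y \in Y_\beta \setminus Y_{\alpha+1}$ already lies in some $Y_\xi \setminus Y_{\alpha+1}$ with $\xi < \beta$, and the inductive hypothesis applies.

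Two of the successor sub-cases are routine. Clause (3) leaves $Y$ unchanged, so nothing new appears. Clause (2) adds fresh names $\dot y_{(s,x)}$ forced to be Mathias reals over $V^{B_\gamma}$; since $\gamma \geq \alpha+1$ we have $V^{B_{\alpha+1}} \cap \omega^\omega \subseteq V^{B_\gamma} \cap \omega^\omega$, and the standard fact that the enumeration of a Mathias real dominates the reals of its ground model gives $B \force$ ``$f_{\dot y_{(s,x)}}$ is dominating over $V^{B_{\alpha+1}} \cap \omega^\omega$'', as required.

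The heart of the argument lies in clause (1), where the new names in $Y_{\gamma+1}$ have the form $\phi^{\pm j}(\dot y')$ with $\dot y' \in Y_\gamma$ and $\phi$ an automorphism produced by Shelah's amalgamation, which in particular fixes pointwise the base algebra $B_{\alpha_0}$ of its book-keeping. In the easy sub-case $\alpha_0 \geq \alpha+1$, $\phi$ also fixes $V^{B_{\alpha+1}} \cap \omega^\omega$ pointwise and permutes $Y_{\alpha+1}$ into itself, so any $\dot y \in Y_{\gamma+1} \setminus Y_{\alpha+1}$ must come from some $\dot y' \in Y_\gamma \setminus Y_{\alpha+1}$, and the inductive hypothesis transfers directly through $\phi$. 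The main obstacle is the residual sub-case $\alpha_0 < \alpha+1$, in which $\phi$ may move names across the $Y_{\alpha+1}$ threshold; here my plan is to unpack $\dot y$ along the book-keeping sequence $\{\alpha_\eta\}$, exploiting that each factor $\amal^\omega(B_{\alpha_\eta},\phi_{\eta-1})/B_{\alpha_\eta}$ is an iteration of copies of the random algebra and is therefore $\omega^\omega$-bounding (as developed in the Appendix and in \cite{Sh85}); this allows one to dominate any $g \in V^{B_{\alpha+1}} \cap \omega^\omega$, on the relevant factor, by some $h \in V^{B_{\alpha_0}} \cap \omega^\omega$. Since $\phi$ fixes $V^{B_{\alpha_0}}$ pointwise and $\dot y$ eventually traces back, through possibly several automorphism iterates, to a genuine Mathias generator, $f_{\dot y}$ dominates each such $h$ and hence each $g$; carrying out this unpacking carefully across the amalgamation structure is the principal technical hurdle.
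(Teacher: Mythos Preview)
Your inductive scaffolding---the limit case, clause (3), clause (2), and the sub-case of clause (1) where the amalgamation thread's base $\alpha_0$ satisfies $\alpha_0 \geq \alpha+1$---is sound and tracks the paper's structure. The genuine gap is in the residual sub-case $\alpha_0 < \alpha+1$, and it is not merely a technical hurdle: the plan you sketch rests on a false premise. The quotient $\amal^\omega(B_{\alpha_\eta},\phi_{\eta-1})/B_{\alpha_\eta}$ is \emph{not} an iteration of random algebras. By the product description of the one-step amalgamation (Lemma~\ref{lemma-amal-product}), over the small subalgebra the amalgamation is $B_{\alpha_\eta}/B' \times B_{\alpha_\eta}/B''$, and $B_{\alpha_\eta}$ already contains Mathias and $\mathsf{Fn}(\omega,\lambda)$ stages; hence these quotients add dominating (indeed collapsing) reals and are far from $\omega^\omega$-bounding. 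Consequently you cannot bound an arbitrary $g \in V^{B_{\alpha+1}} \cap \omega^\omega$ by some $h \in V^{B_{\alpha_0}} \cap \omega^\omega$: the Mathias stages between $\alpha_0$ and $\alpha+1$ produce reals not dominated by anything in $V^{B_{\alpha_0}}$.

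What the paper does instead is exploit this same product structure, but in the opposite direction. The core observation (Sublemma~\ref{sublemma-1}) is that in \emph{any} product $A_1 \times A_2$, an $A_1$-name $\dot f$ dominating $V \cap \omega^\omega$ remains dominating over $V^{A_2} \cap \omega^\omega$---a short direct forcing argument requiring no bounding hypothesis on $A_2$ whatsoever. Iterating this through the $\omega$-step amalgamation (Sublemma~\ref{sublemma1bis}) yields that $f_{\phi_\eta^{\pm j}(\dot x)}$ dominates $V^{B_{\alpha_\eta}} \cap \omega^\omega$ whenever $f_{\dot x}$ dominated the two intermediate models $V^{B'}$ and $V^{B''}$. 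The $\omega^\omega$-bounding of $\random$ is invoked only once, at the base $\eta=1$ of each thread, to pass from ``dominating over $V^{B_{\alpha_0}}$'' to ``dominating over $V^{B'}$ and $V^{B''}$'' (since $B'/B_{\alpha_0} \cong B''/B_{\alpha_0} \cong \random$). That product-preservation step, not any bounding property of the amalgamation quotient, is the engine you are missing.
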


For $\beta$ limit the proof is trivial. For $\beta+1$, we have two cases.

Case as in part (2) of the recursive construction, i.e. $Y_{\beta+1}= Y_\beta \cup \{\dot{y}_{(s,x)}: (s,x) \in \mathias \}$. In this case $\dot{y}$ has to be a Mathias real over $V^{B_{\alpha+1}}$ and therefore $f_{\dot y}$ is dominating over $V^{B_{\alpha+1}} \cap \omega^\omega$. 

Case as in part (1) of the construction, i.e. 
\[
B_{\beta+1} \force Y_{\beta +1} := Y_{\beta} \cup \{
\phi^j(\dot{y}), \phi^{-j}(\dot{y}): \dot{y} \in
Y_{\beta}, j \in \omega \}, 
\]
where $\phi$'s are the associated automorphisms generated by the amalgamation. 

The aim is to show that the property of ``being dominating" is preserved through the construction unfolded in part (1), both by the amalgamation process and by iteration of random forcing. More precisely, we need the following lemma.

\begin{lemma} \label{lemma:preserve-dominating}
Let $\eta > 0$ be a successor ordinal.  Let $\algebra{B}', \algebra{B}'' \lessdot \algebra{B}_{\alpha_\eta}$ and $\dot{x} \in V^{\algebra{B}_{\alpha_\eta}} \cap \cantor$
such that
\[
 \algebra{B}_{\alpha_\eta} \force \text{``$f_{\dot{x}}$ is dominating over both $V^{\algebra{B}'} \cap \omega^\omega$ and $V^{\algebra{B}''} \cap \omega^\omega$''},
\]
and $\psi: \algebra{B}' \rightarrow \algebra{B}''$ isomorphism.

Then, for every $j \in \omega$,
\[
 \algebra{B}_{\alpha_\eta+1} \force \text{``$f_{\phi^j_\eta (\dot{x})}$ and $f_{\phi^{-j}_\eta (\dot{x})}$ are dominating over $V^{\algebra{B}_{\alpha_\eta}} \cap \omega^\omega$''}.
\]
where $\algebra{B}_{\alpha_\eta+1}=\amal^\omega(\algebra{B}_{\alpha_\eta}, \psi)$, and $\phi_\eta$ is the automorphism extending $\psi$, generated
by the amalgamation.
\end{lemma}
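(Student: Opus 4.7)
The plan is to lift the dominating hypothesis on $\dot{x}$ through the automorphism $\phi_\eta$, exploiting two structural properties of Shelah's amalgamation over random (summarised in the Appendix): (i) the quotients $\algebra{B}_{\alpha_\eta}/\algebra{B}'$ and $\algebra{B}_{\alpha_\eta}/\algebra{B}''$ are forcing-equivalent to random, hence $\baire$-bounding; and (ii) if $G$ is $\algebra{B}_{\alpha_\eta+1}$-generic over $V$ and I set $G_0 := G \cap \algebra{B}_{\alpha_\eta}$, $\tilde{G} := \phi_\eta^{-1}[G]$, $G_1 := \tilde{G} \cap \algebra{B}_{\alpha_\eta}$, then both $G_0$ and $G_1$ are $\algebra{B}_{\alpha_\eta}$-generic over $V$, the amalgamation identification yields $V[G_0 \cap \algebra{B}''] = V[G_1 \cap \algebra{B}']$, and $\phi_\eta(\dot{x})[G] = \dot{x}[G_1]$.

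First I would handle the single-step case $j = 1$. Fix an arbitrary $g \in V[G_0] \cap \baire$. By (i) applied to the quotient $\algebra{B}_{\alpha_\eta}/\algebra{B}''$, there is $h \in V[G_0 \cap \algebra{B}''] \cap \baire$ with $g \leq^* h$. By (ii) this same $h$ lies in $V[G_1 \cap \algebra{B}'] \cap \baire$. The hypothesis on $\dot{x}$ asserts in particular that $f_{\dot{x}[G_1]}$ dominates every element of $V[G_1 \cap \algebra{B}'] \cap \baire$, so $h \leq^* f_{\dot{x}[G_1]} = f_{\phi_\eta(\dot{x})[G]}$, and hence $g \leq^* f_{\phi_\eta(\dot{x})[G]}$, as required. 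The argument for $\phi_\eta^{-1}(\dot{x})$ is the symmetric one, swapping the roles of $\algebra{B}'$ and $\algebra{B}''$ and using (i) for $\algebra{B}_{\alpha_\eta}/\algebra{B}'$ together with the dominating hypothesis over $V^{\algebra{B}''}$.

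For $j > 1$ I would proceed by induction along the $\omega$-iteration that defines $\amal^\omega(\algebra{B}_{\alpha_\eta}, \psi)$. At each finite stage $\algebra{B}(n) \lessdot \algebra{B}(n+1)$ the basic amalgamation extends a partial automorphism $\hat{\psi}_n$, the quotient $\algebra{B}(n+1)/\algebra{B}(n)$ remains random-like, and the single-step argument transfers the dominating property one stage at a time; since $\algebra{B}_{\alpha_\eta+1}$ is ccc, any name for a real is already a $\algebra{B}(n)$-name for some finite $n$, so the $\omega$-limit inherits the property. The main obstacle — and the reason the lemma is not immediate — is precisely the identification (ii): one has to keep careful track of how the two $\algebra{B}_{\alpha_\eta}$-generics $G_0$ and $G_1$ interlock over the common base $\algebra{B}'$ (identified with $\algebra{B}''$ via $\psi$), and to check this compatibly through all stages of $\amal^\omega$. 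I would therefore rely on the detailed analysis of the amalgamation in the Appendix for that bookkeeping; once (i) and (ii) are in hand, the proof amounts to combining $\baire$-boundedness of random with the dominating hypothesis on $\dot{x}$.
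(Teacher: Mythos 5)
Your premise (i) contains a genuine gap. You claim that $\algebra{B}_{\alpha_\eta}/\algebra{B}'$ and $\algebra{B}_{\alpha_\eta}/\algebra{B}''$ are forcing-equivalent to random, hence $\baire$-bounding; but that is not what the construction provides, nor is it true. What is random (and only in the base case $\eta=1$) are the quotients from \emph{below}, namely $\algebra{B}'/\algebra{B}_{\alpha_0}$ and $\algebra{B}''/\algebra{B}_{\alpha_0}$. The quotients from \emph{above}, $\algebra{B}_{\alpha_\eta}/\algebra{B}'$ and $\algebra{B}_{\alpha_\eta}/\algebra{B}''$, absorb everything built between those subalgebras and $\algebra{B}_{\alpha_\eta}$, and the global construction interleaves amalgamations with Mathias forcing and with $\mathsf{Fn}(\omega,\lambda_\alpha)$; both of the latter add unbounded (indeed dominating) reals, so those quotients are far from $\baire$-bounding. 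For $\eta\geq 2$ the situation is even starker, since there $\algebra{B}'=\algebra{B}_{\alpha_{\eta-1}}$, so your (i) would amount to saying that each step of the whole iteration is random-like. Consequently the central move of your argument --- ``by $\baire$-boundedness of $\algebra{B}_{\alpha_\eta}/\algebra{B}''$, find $h\in V[G_0\cap \algebra{B}'']$ with $g\leq^* h$'' --- is not available, and nothing in your proposal substitutes for it.

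The paper's proof goes via a different mechanism. Sublemma~\ref{sublemma-1} exploits Lemma~\ref{lemma-amal-product}, which identifies the one-step amalgamation with a two-step iteration $\algebra{B}_1 * \big( (\algebra{B}/\algebra{B}_1) \times (\algebra{B}/\algebra{B}_2) \big)$ over the common complete subalgebra, and then argues a product-preservation statement (domination by a name living on one factor is preserved into the extension by the other factor) with no regularity assumption on the quotients; this is then iterated through $\amal^\omega$ in Sublemma~\ref{sublemma1bis}. The randomness of $\algebra{B}'/\algebra{B}_{\alpha_0}$ appears only in the $\eta=1$ base case, and in the \emph{opposite} direction to how you invoke $\baire$-bounding: since random is $\omega^\omega$-bounding, domination over $V^{\algebra{B}_{\alpha_0}}$ lifts automatically to domination over $V^{\algebra{B}'}$ and $V^{\algebra{B}''}$, which is exactly what supplies the hypothesis of the sublemma. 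Your observation (ii) about the two generics $G_0$, $G_1$ agreeing over the common base $\algebra{B}'\cong\algebra{B}''$ is correct and is in the spirit of the paper's argument, but without a correct replacement for (i) the proposal does not establish the lemma.
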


\begin{sublemma}
[Preservation by one-step amalgamation]  \label{sublemma-1}
Let $\algebra{B}, \algebra{B}_1, \algebra{B}_2, \phi_0$, $e_1$, $e_2$ as in the Appendix and $\dot{x}$ a $\algebra{B}$-name for an element of $\cantor$ such that 
$\algebra{B}$ forces $f_{\dot{x}}$ is dominating over $V^{\algebra{B}_1} \cap \omega^\omega$ and  $V^{\algebra{B}_2} \cap \omega^\omega$.
Then
\begin{equation} \label{eq-5}
\amal(\algebra{B},\phi_0) \force \text{``$f_{e_1(\dot{x})}$ is dominating over $V^{e_2[\algebra{B}]} \cap \omega^\omega$''}.
\end{equation}
(And analogously $\amal(\algebra{B},\phi_0) \force \text{``$f_{e_2(\dot{x})}$ is dominating over $V^{e_1[\algebra{B}]} \cap \omega^\omega$''}$.)
\end{sublemma}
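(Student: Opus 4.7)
The plan is to reduce the conclusion, via the $\omega^\omega$-bounding of the random-like quotient forcing, to a direct invocation of the hypothesis. First I would fix an $\amal(B,\phi_0)$-generic filter $\hat{G}$ over $V$ and set $G^{(1)} := e_1^{-1}[\hat{G}]$ and $G^{(2)} := e_2^{-1}[\hat{G}]$; both are $B$-generic over $V$ by completeness of the embeddings. The crucial structural property inherited from the Appendix is that $e_1 \restric B_1 = e_2 \circ \phi_0$, which transports the generic along $\phi_0$ and yields the key identification
\[
V[G^{(1)} \cap B_1] \;=\; V[\phi_0[G^{(1)} \cap B_1]] \;=\; V[G^{(2)} \cap B_2],
\]
since a Boolean-algebra isomorphism in $V$ turns a generic on one side into a generic on the other with the same transitive extension.

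Next, fix arbitrarily an $e_2[B]$-name $\dot{h}$ for an element of $\omega^\omega$, and let $h := \dot{h}[\hat{G}] \in V[G^{(2)}]$. The second step is to replace $h$ by a dominating real already living in the common submodel: since $V[G^{(2)}]$ is a $B/B_2$-extension of $V[G^{(2)} \cap B_2]$ and $B/B_2$ is (forced to be) isomorphic to $\random$, which is $\omega^\omega$-bounding, there exists $h' \in V[G^{(2)} \cap B_2] \cap \omega^\omega$ with $h(n) \leq h'(n)$ for every $n \in \omega$. By the identification above, $h' \in V[G^{(1)} \cap B_1] \cap \omega^\omega$.

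Finally, I would apply the hypothesis to the $B$-generic $G^{(1)}$: inside $V[G^{(1)}]$ the function $f_{\dot{x}[G^{(1)}]}$ dominates every element of $V[G^{(1)} \cap B_1] \cap \omega^\omega$, and in particular eventually dominates $h'$, hence also $h$. Since $\dot{h}$ was arbitrary, this yields (\ref{eq-5}); the parenthetical symmetric assertion about $e_2(\dot{x})$ is obtained in an entirely analogous fashion by exchanging the roles of $e_1$ and $e_2$ and invoking instead the $V^{B_2}$-half of the hypothesis together with the $\omega^\omega$-bounding of $B/B_1$. I expect the main subtlety to be the rigorous verification that the identification $V[G^{(1)} \cap B_1] = V[G^{(2)} \cap B_2]$ holds at the level of transitive models (including the compatibility of name evaluations under $\phi_0$), which is the step that genuinely uses the Appendix-level properties of $\amal(B,\phi_0)$; the remainder of the argument depends only on the hypothesis and the well-known $\omega^\omega$-bounding of random forcing.
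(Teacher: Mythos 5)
Your general strategy --- passing to the two $B$-generics $G^{(1)} = e_1^{-1}[\hat G]$ and $G^{(2)} = e_2^{-1}[\hat G]$ and using $e_1 \restric B_1 = e_2 \circ \phi_0$ to get the identification $V[G^{(1)} \cap B_1] = V[G^{(2)} \cap B_2]$ --- is sound as far as it goes, and that identification is indeed the structural heart of the matter. The gap is the reduction step where you pull $h \in V[G^{(2)}] \cap \omega^\omega$ down to a bound $h' \in V[G^{(2)} \cap B_2]$ by appealing to ``$B/B_2$ is (forced to be) isomorphic to $\random$, which is $\omega^\omega$-bounding.'' This is not a hypothesis of Sublemma \ref{sublemma-1}. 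The sublemma is stated for arbitrary complete Boolean algebras $B, B_1, B_2$ as in the Appendix, with no assumption whatsoever on the quotient $B/B_2$; the randomness hypothesis in this part of the paper enters only in the $\eta=1$ base case of Lemma \ref{lemma:preserve-dominating}, not in the sublemma itself. And the sublemma genuinely needs to hold for general quotients: Sublemma \ref{sublemma1bis} is obtained by applying Sublemma \ref{sublemma-1} recursively along the finite amalgamation stages $\amal^n(B,\psi)$, and in the inductive step $\eta \geq 2$ of Lemma \ref{lemma:preserve-dominating} the algebras $B' = B_{\alpha_{\eta-1}}$ and $B'' = \phi_{\eta-1}[B_{\alpha_{\eta-1}}]$ play the roles of $B_1,B_2$, so the relevant quotients are themselves amalgamation algebras and not random. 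If $B/B_2$ is not $\omega^\omega$-bounding there is simply no reason for $h$ to be bounded by anything in the intermediate model, and your argument stops.

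The paper's own proof takes a different route: it invokes the product decomposition of Lemma \ref{lemma-amal-product}, writing $\amal(B,\phi_0)/e_1[B_1]$ over the $B_1$-extension as a genuine product $A_1 \times A_2$ with $A_1 = (B/B_1)^H$, $A_2 = (B/B_2)^H$, and then argues by a direct forcing computation (mutual genericity of the two factors) that a name forced by $A_1$ to dominate the ground model's reals continues to dominate the $A_2$-side. That route makes no bounding assumption on either factor. To repair your proof you would need either to drop the $\omega^\omega$-bounding step and prove the preservation directly over the product decomposition, or to add an $\omega^\omega$-bounding hypothesis to the sublemma --- but the latter would then break the recursive applications in Sublemma \ref{sublemma1bis}.
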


\begin{proof}[Proof of Sublemma 1]
By Lemma \ref{lemma-amal-product} in Appendix, putting $V=N[H]$, $A_1= (B / B_1)^H$, $A_2= (B / B_2)^H$, it is sufficient to prove that given $A_1,A_2$ complete Boolean algebras and $\dot{f}$ a $A_1$-name for an element of $\omega^\omega$, if
$$A_1 \force \text{``$\dot{f}$ is dominating over $V \cap \omega^\omega$''},$$
then
\[
A_1 \times A_2 \force \text{``$\dot{f}$ is dominating over $V[G] \cap \omega^\omega$'' },
\]
where $G$ is $A_2$-generic over $V$.
To reach a contradiction, assume there is $z \in \baire \cap V[G]$, $(a_1,a_2) \in A_1 \times A_2$ such that $(a_1,a_2) \force \exists^\infty n \in \omega ( z(n) > f(n))$. Let $\{ n_j: j \in \omega \}$ enumerate all such $n$'s, and for every $j \in \omega$ pick $b_j \in A_2$, $b_j \leq a_2$ and $k_j \in \omega$ such that $(a_1,b_j) \force z(n_j)=k_j$; note that this can be done since $z \in V[G]$ and $G$ is $A_2$-generic over $V$; hence $z$ can be seen as an $A_2$-name and so it is suffcient to strengthen conditions in $A_2$ in order to decide its values. Since $A_1$ forces $f$ be dominating over $V \cap \omega^\omega$, one can pick $a \leq a_1$ such that $(a,a_2) \force \exists m \forall j \geq m(k_j \leq f(n_j))$. Pick $j' > m$; then 
\begin{itemize} 
\item[-] on the one side, since $(a,b_{j'}) \leq (a_1,a_2)$, it follows $(a,b_{j'}) \force f(n_{j'}) < k_{j'} = z(n_{j'})$ 
\item[-] on the other side, since $(a,b_{j'}) \leq (a,a_2)$, it follows $(a,b_{j'}) \force f(n_{j'}) \geq k_{j'}=z(n_{j'})$,  
\end{itemize}
which is a contradiction.
\end{proof}

\begin{sublemma}[Preservation by $\omega$-step amalgamation] \label{sublemma1bis}
Let $B$ be a complete Boolean algebra, $\algebra{B}', \algebra{B}'' \lessdot \algebra{B}$ and $\dot{x} \in V^{\algebra{B}} \cap \cantor$
such that
\[
\algebra{B} \force \text{``$f_{\dot{x}}$ is dominating over both $V^{\algebra{B}'} \cap \omega^\omega$ and $V^{\algebra{B}''} \cap \omega^\omega$''},
\]
with $\psi: \algebra{B}' \rightarrow \algebra{B}''$ isomorphism.

Then, for every $j \in \omega$,
\[
\amal^\omega(\algebra{B},\psi) \force \text{``$f_{\phi^{j} (\dot{x})}$ and $f_{\phi^{-j} (\dot{x})}$ are dominating over $V^{\algebra{B}} \cap \omega^\omega$''}.
\]
where $\phi: \amal^\omega(\algebra{B},\psi) \rightarrow \amal^\omega(\algebra{B},\psi)$ is the automorphism extending $\psi$, generated
by the amalgamation.
\end{sublemma}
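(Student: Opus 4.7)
The plan is to reduce Sublemma \ref{sublemma1bis} to iterated applications of the one-step preservation result Sublemma \ref{sublemma-1}, by treating $\amal^\omega(\algebra{B}, \psi)$ as the direct limit of its natural defining chain of one-step amalgamations. First I would make precise, following the description in the Appendix, that $\amal^\omega(\algebra{B}, \psi)$ is built as an increasing sequence of complete Boolean algebras $\algebra{B} = \algebra{C}_0 \lessdot \algebra{C}_1 \lessdot \algebra{C}_2 \lessdot \cdots$ whose limit is $\amal^\omega(\algebra{B}, \psi)$, where each $\algebra{C}_{n+1}$ is a one-step amalgamation of $\algebra{C}_n$ along a partial isomorphism $\psi_n$ refining $\psi$. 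The extending automorphism $\phi$ is the direct-limit of these partial isomorphisms and satisfies $\phi[\algebra{C}_n] \subseteq \algebra{C}_{n+1}$; in particular, for $\dot x \in V^{\algebra{B}}$, the name $\phi^j(\dot x)$ lives in $V^{\algebra{C}_j}$.

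The core of the argument will be an induction on $j \geq 1$. For $j = 1$, one applies Sublemma \ref{sublemma-1} directly to $\algebra{C}_1 = \amal(\algebra{B}, \psi)$: the hypothesis provided in Sublemma \ref{sublemma1bis} is exactly the hypothesis of Sublemma \ref{sublemma-1}, so we obtain $\algebra{C}_1 \force f_{e_1(\dot x)}$ is dominating over $V^{e_2[\algebra{B}]}$; under the canonical identifications fixed by the construction this reads $\algebra{C}_1 \force f_{\phi(\dot x)}$ is dominating over $V^{\algebra{B}}$, and the $\phi^{-1}$ case is symmetric. For the inductive step $j \to j+1$, I would reapply Sublemma \ref{sublemma-1} at stage $j$: the amalgamation producing $\algebra{C}_{j+1}$ from $\algebra{C}_j$ is performed along two complete subalgebras $\algebra{C}'_j, \algebra{C}''_j \lessdot \algebra{C}_j$ which are the images of $\algebra{B}'$ and $\algebra{B}''$ under the $j$-th iterate of the evolving partial automorphism. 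The inductive hypothesis, together with the fact that each of $V^{\algebra{C}'_j}$ and $V^{\algebra{C}''_j}$ embeds into $V^{\algebra{B}}$ via the identifications along the chain, then ensures that $f_{\phi^j(\dot x)}$ is dominating over both $V^{\algebra{C}'_j} \cap \omega^\omega$ and $V^{\algebra{C}''_j} \cap \omega^\omega$; feeding this into Sublemma \ref{sublemma-1} yields $\algebra{C}_{j+1} \force f_{\phi^{j+1}(\dot x)}$ is dominating over $V^{\algebra{C}_j}$, and \emph{a fortiori} over $V^{\algebra{B}}$.

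To conclude, one notes that the property of being dominating over the \emph{fixed} set $V^{\algebra{B}} \cap \omega^\omega$ is upward-absolute from $V^{\algebra{C}_j}$ to $V^{\amal^\omega(\algebra{B}, \psi)}$, so the statement lifts from the intermediate stage to the full amalgamation; the $\phi^{-j}$ branch is handled by running the same inductive argument with $\psi^{-1}$ in place of $\psi$. The hard part will be the bookkeeping in the inductive step: one must carefully verify that at every finite stage $j$ the pair of complete subalgebras along which the next amalgamation is performed really is a pair of $\psi_j$-isomorphic copies realizing $\algebra{B}', \algebra{B}''$ inside $\algebra{C}_j$, and that $f_{\phi^j(\dot x)}$ dominates over each of them. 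This is essentially a matter of unwinding the Appendix's recursive construction of $\amal^\omega$ together with the transitivity of the chain, and no forcing-theoretic ingredient beyond Sublemma \ref{sublemma-1} is needed.
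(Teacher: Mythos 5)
Your plan matches the paper's own argument: the paper proves this sublemma by ``a recursive application of Sublemma~\ref{sublemma-1} following the line of the proof of [JR93, Lemma~3.4] by replacing `unbounded' with `dominating','' which is exactly the inductive scheme you describe, with the base case being a literal application of Sublemma~\ref{sublemma-1} and the lifting to $\amal^\omega$ handled by direct-limit absoluteness. One caveat worth flagging: in your inductive step you say the amalgamation producing $\algebra{C}_{j+1}$ from $\algebra{C}_j$ is performed along subalgebras ``which are the images of $\algebra{B}'$ and $\algebra{B}''$ under the $j$-th iterate,'' but in the Appendix's construction the $(n+1)$-st one-step amalgamation is performed along the isomorphism $f_n$ between two copies of the \emph{whole} previous algebra $\amal^{n-1}(\algebra{B},\psi)$ (namely $e_2[\amal^{n-1}]$ and $e_1[\amal^{n-1}]$), not merely along images of $\algebra{B}',\algebra{B}''$; accordingly the inductive hypothesis one must carry is domination over $V^{\amal^{n-1}(\algebra{B},\psi)}\cap\omega^\omega$, and one must check that Sublemma~\ref{sublemma-1} actually hands this back at each stage. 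This is exactly the bookkeeping you label as ``the hard part,'' and it is the content of [JR93, Lemma~3.4], which the paper defers to rather than reproducing; your sketch is therefore at the same level of detail as the paper's own proof.
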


The proof simply consists of a recursive application of Sublemma \ref{sublemma-1} following the line of the proof of \cite[Lemma 3.4]{JR93} by replacing the notion of ``unbounded" with ``dominating". 

Note that \ref{sublemma1bis} is enough to show Lemma \ref{lemma:preserve-dominating} when $\eta \geq 2$ successor, by considering $B=B_{\alpha_\eta}$, $\amal^\omega (B, \phi)=B_{\alpha_{\eta}+1}$, $B'=B_{\alpha_{\eta-1}}$, $B''=\phi_{\eta-1}[B_{\alpha_{\eta-1}}]$ and $\psi=\phi_{\eta-1}$.

 It is only left to show the case $\eta=1$, which is: $\algebra{B}_{\alpha_0} \lessdot \algebra{B}', \algebra{B}'' \lessdot \algebra{B}_{\alpha_1}$
such that
$\algebra{B}_{\alpha_0}$ forces $(\algebra{B}'/ \algebra{B}_{\alpha_0}) \approx (\algebra{B}'' / \algebra{B}_{\alpha_0}) \approx \random$, 
and $\phi_0: \algebra{B}' \rightarrow \algebra{B}''$ isomorphism such that
$\phi_0 \restric \algebra{B}_{\alpha_0} = \emph{Id}_{\algebra{B}_{\alpha_0}}$. Then for every $\dot{x} \in V^{\algebra{B}_{\alpha_1}} \cap \cantor$ such that
$\algebra{B}_{\alpha_1} \force \text{``$f_{\dot{x}}$ is dominating over $V^{\algebra{B}_{\alpha_0}} \cap \omega^\omega$''} $, one has, for every $j \in \omega$,
\[
 \algebra{B}_{\alpha_1+1} \force \text{`` $f_{\phi^j_1 (\dot{x})}$ and $f_{\phi^{-j}_1 (\dot{x})}$ are dominating over $V^{\algebra{B}_{\alpha_1}} \cap \omega^\omega$''}.
\]

But, since $\algebra{B}_{\alpha_0}$ forces both $(\algebra{B}'/\algebra{B}_{\alpha_0}) \approx (\algebra{B}''/\algebra{B}_{\alpha_0}) \approx \random$, by Sublemma \ref{sublemma-1} and the fact that random forcing is $\omega^\omega$-bounding (and thus it preserves dominating reals), we obtain $\amal^\omega(B_{\alpha_1}, \phi_0) = \algebra{B}_{{\alpha_1}+1}$ and
\[
\algebra{B}_{{\alpha_1}+1} \force \text{``$f_{\dot{x}}$ is dominating over both $V^{\algebra{B}_{\alpha_0}*(\algebra{B}'/ \algebra{B}_{\alpha_0})} \cap \omega^\omega$ and
$V^{\algebra{B}_{\alpha_0}*(\algebra{B}''/ \algebra{B}_{\alpha_0})} \cap \omega^\omega$''}.
\]
\end{proof}

It is easy to see that the construction developed can be combined with Shelah's original one, simply by recursively construct in parallel a set being non-Baire. As a consequence one can obtain a model satisfying the following WR-Diagram
\begin{center}
\begin{tikzpicture}[-, auto, node distance=3.0cm, thick]
\tikzstyle{every state}=[fill=white, draw=none, text=black]
\node[state](U){$\blacksquare$};
\node[state](SP)[right of=U]{$\blacksquare$};
\node[state](IP)[right of=SP, node distance=2.0cm]{?};
\node[state] (NL) [above of=SP]{$\blacksquare$};  
\node[state] (NB) [above of=IP]{$\square$}; 
\node[state](NR)[right of=IP]{$\square$};

\path 	(U) edge (SP)
         	(SP) edge  (IP)
		(SP) edge (NL)    
         	(IP) edge (NB)
		(IP) edge (NR)
		;
\end{tikzpicture}
\end{center}

Note that the status of $\IP$ is not clear in this model.

\begin{remark}
Some other combinations of the WR-Diagram are already known or follow easily from known results. 
For instance, in order to obtain a model for $\NB=\blacksquare \land \NL=\square$, we can consider $N$ be Shelah's model constructed in \cite{Sh85}, where every set of reals has the Baire property. Note that such a model is obtained with no need of inaccessible cardinals. Since in \cite{Sh85} it is also shown that to get a model where every set of reals is Lebesgue measurable we need an inaccessible, we can then deduce that in $N$ there is a set that is not Lebesgue measurable. Note that in such a model the status of $\NR$ is not clear. More generally, the interplay between $\NB$ and $\NR$ is still open, since the lemmata about the preservation of dominating and/or unbounded reals do not extend when we amalgamate over Cohen or Mathias forcing, in place of random forcing.
\end{remark}

\subsection{Weak Pareto for larger utility domain}
\label{s4.2}

In this last sub-section we make a digression away from the WR-Diagram and we dealt with WP, thus providing an answer to \cite[Problem 11.14]{Mathias} also in case we consider the Paretian condition being the weakest possible. As we already notice in Remark \ref{Remark2}, WP is trivial when $Y=\{ 0,1 \}$. Moreover, whenever $Y$ is well-founded, then one can simply consider the function $f: Y^\omega \rightarrow \mathbb{R}$ such that $f(x):= \min\{ x(n): n \in \omega\}$; then define $x \prec y :\Leftrightarrow f(x) < f(y)$ and $x \sim y :\Leftrightarrow f(x) = f(y)$ in order to get a total SWR on $Y^\omega$ satisfying WP and A. 

Here we give a proof that the existence of a total SWR satisfying WP and A gives a non-$\poset{MV}$-measurable set when $Y \subseteq [0,1]$ contains a subset with order type $\mathbb{Z}$; we present a proof for $Y=\mathbb{Z}$ to make the notation less cumbersome, but it is straightforward to notice that precisely the same argument works for any $Y$ with order type $\mathbb{Z}$.

Given $x \in 2^\w$, let $U(x):= \{n \in \w: x(n)=1\}$ and $\{n^x_k: k \in \w \}$ enumerate $U(x)$. As in the case of Proposition \ref{P-ip}, 
define $o(x)$ and $e(x)$; next use the following notation:
\begin{itemize}
\item let $\{l_k: k \geq 1\}$ enumerate all elements in $o(x)$ and $\{u_k: k \geq 1\}$ enumerate all elements in $\omega \setminus o(x)$;
\item let $\{l^{\prime}_k: k \geq 1\}$ enumerate all elements in $e(x)$ and $\{u^{\prime}_k: k \geq 1\}$ enumerate all elements in $\omega \setminus e(x)$;
\end{itemize}
Note that for every $k \geq 1$, one has $l^{\prime}_k=u_{n_1+(k-1)}$ and $l_k=u'_{n_1+(k-1)}$.
Next we define the following pair of sequences $o(\textbf{x}), e(\textbf{x})$ in $\mathbb{Z}^\omega$:
\begin{equation}\label{T3e0}
o(\textbf{x})(n)= \Big \{ 
\begin{array}{ll}
k & \text{if $n=l_k$,  for some $k \geq 1$}\\
-k& \text{if $n=u_k$, for some $k \geq 1$}, 
\end{array}
\end{equation}

\begin{equation}\label{T3e00}
e(\textbf{x})(n) = \Big \{ 
\begin{array}{ll}
 k & \text{if $n=l^{\prime}_k$, for some $k \geq 1$}\\
-k& \text{if $n=u^{\prime}_k$, for some $k \geq 1$}. 
\end{array}
\end{equation}

\begin{proposition}\label{P-wp} 
Let $\precsim$ denote a total SWR satisfying WP and A on $X= \mathbb{Z}^{\w}$.
Then there exists a subset of $2^{\w}$ which is not $\poset{MV}$-measurable.%
\footnote{It is clear from the proof that one could get the same result in an even slightly more general setting, namely with $Y$ any set of utilities with order type $\mathbb{Z}$.}
\end{proposition}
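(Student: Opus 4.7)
Following the template of Proposition \ref{P-ip}, I would define $\Gamma := \{x \in 2^\omega : e(\textbf{x}) \prec o(\textbf{x})\}$ and aim to show that $\Gamma$ is not $\poset{MV}$-measurable. Fix an arbitrary $p \in \poset{MV}$ with Mathias triples $\{(n_{m_j}, n_{m_j+1}, n_{m_j+2}) : j \geq 1\}$, and choose $x \in [p]$ with $x(n_k) = 1$ at every $n_k \in S(p) \cup U(p)$. By totality of $\precsim$, we split into three cases according to the comparison between $o(\textbf{x})$ and $e(\textbf{x})$, and in each case construct $z \in [p]$ by the same bit-flipping patterns as in the proof of Proposition \ref{P-ip}, modifying $x$ only at splitting levels inside the Mathias triples.

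For Case 1 ($e(\textbf{x}) \prec o(\textbf{x})$), the aim is the transitivity chain
\[
o(\textbf{z}) \sim f_\pi(o(\textbf{z})) \prec e(\textbf{x}) \prec o(\textbf{x}) \prec f_{\pi'}(e(\textbf{z})) \sim e(\textbf{z}),
\]
which yields $o(\textbf{z}) \prec e(\textbf{z})$ and hence $z \notin \Gamma$. Here the outer $\sim$-steps come from A, the middle $\prec$ is the case hypothesis, and the two remaining $\prec$-steps are applications of WP; the whole argument therefore reduces to constructing finite permutations $\pi,\pi' \in \mathcal{F}$ such that $f_\pi(o(\textbf{z}))(n) < e(\textbf{x})(n)$ and $o(\textbf{x})(n) < f_{\pi'}(e(\textbf{z}))(n)$ hold at \emph{every} coordinate $n \in \omega$.

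Building these permutations is the main obstacle, and it is exactly where the hypothesis $Y = \mathbb{Z}$ (rather than $Y = \{0,1\}$) pays off: WP requires strict inequality at all coordinates, whereas IP in Proposition \ref{P-ip} only needed it at infinitely many. The strategy is to use the bit-flipping inside the Mathias triples to make the natural sign-pattern of $o(\textbf{z})$ versus $e(\textbf{x})$ agree in the required direction on all but a finite initial set of coordinates, and then to absorb the remaining finitely many mismatches by swapping them with fresh late coordinates lying in $\bigcup_{j>1}[n_{m_j}, n_{m_j+1})$. At those late coordinates, the 0-block structure of the Mathias triples forces $o(\textbf{z})$ and $e(\textbf{x})$ to take arbitrarily large values of the appropriate sign, which provides the ``room'' in $\mathbb{Z}$ needed to kill any finite discrepancy via a single finite permutation. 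Cases 2 and 3 follow analogously: Case 2 is symmetric to Case 1 (with flipping and permutations mirrored), while in Case 3 ($o(\textbf{x}) \sim e(\textbf{x})$) one chooses the flipping so that the asymmetry introduced by the triple-wise 0-blocks yields $f_\pi(e(\textbf{z}))(n) < f_{\pi'}(o(\textbf{z}))(n)$ for all $n$, whence WP gives $e(\textbf{z}) \prec o(\textbf{z})$ directly and therefore $z \in \Gamma$.
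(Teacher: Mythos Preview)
Your proposal is correct and follows essentially the same route as the paper: the same set $\Gamma$, the same choice of $x$ and the same bit-flipping constructions of $z$ from Proposition~\ref{P-ip}, and the same two-stage mechanism (first establish eventual coordinatewise strict domination, then use a finite permutation to repair the finitely many bad initial coordinates by swapping them into later Mathias-triple blocks where the $\mathbb{Z}$-values are large enough). The paper packages this as Claims~\ref{C1} and~\ref{C2}.

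One point to adjust: in Case~3 the paper does \emph{not} compare $e(\textbf{z})$ and $o(\textbf{z})$ directly as you suggest. Instead it shows (after finite permutations) that $e(\textbf{z}) \prec e(\textbf{x})$ and $o(\textbf{x}) \prec o(\textbf{z})$ separately via WP, and then invokes the case hypothesis $e(\textbf{x}) \sim o(\textbf{x})$ as the bridge. Your proposed direct inequality $f_\pi(e(\textbf{z}))(n) < f_{\pi'}(o(\textbf{z}))(n)$ for all $n$ is not obviously obtainable, since $e(\textbf{z})$ and $o(\textbf{z})$ have opposite signs on complementary infinite blocks and a finite permutation cannot align them coordinatewise; routing through $e(\textbf{x}) \sim o(\textbf{x})$ is what makes the argument go.
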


\begin{proof}
The structure of the proof is similar to Proposition \ref{P-ip}, but some technical details are different.
Let $\precsim$ be a total SWR satisfying WP and A, and put $\Gamma:= \{x \in 2^\w: e(\textbf{x}) \prec o(\textbf{x})\}$. 
Given any $p \in \poset{MV}$, let $\{n_k: k \geq 1 \}$ enumerate all natural numbers in $S(p) \cup U(p)$. 
We aim to find $x$, $z \in [p]$ such that $x \in \Gamma \ifif z \notin \Gamma$. 
We proceed as follows: pick $x \in [p]$ such that for all $n_k \in S(p) \cup U(p)$, $x(n_k)=1$. 
Let $\left\{\left(n_{m_j}, n_{m_{j}+1}, n_{m_{j}+2}\right): j \in \w \right\}$ be an enumeration of all Mathias triples in $p$. 
We need to consider three cases.

\begin{itemize}
\item Case $e(\textbf{x}) \prec o(\textbf{x})$: We remove $n_{m_1+1}$, $n_{m_j}$, $n_{m_j+1}$, for all $j>1$ from $U(x)$ to obtain $z \in 2^\w$ as follows: 
\[
z(n) = \Big \{ 
\begin{array}{ll}
x(n) & \text{if $n\notin \left\{n_{m_1+1}, n_{m_j}, n_{m_j+1}: j>1\right\}$}\\
0 &  \text{otherwise}.
\end{array}
\]
Let
\[
\begin{split}
O(m_1):=& [n_1,n_2) \cup [n_3,n_4) \cdots [n_{m_1-1}, n_{m_1}), \\
E(m_1):=& [n_2,n_3) \cup [n_4,n_5) \cdots [n_{m_1}, n_{m_1+1}).
\end{split}
\]

\begin{claim}\label{C1}
There exists $N\in \w$ such that $e(\textbf{x})(n)> o(\textbf{z})(n)$ holds for all $n>N$.
\end{claim}
\begin{proof}
We distinguish two cases.
\begin{enumerate}
\item{$|O(m_1)| < |E(m_1)|$: Among coordinates $n<n_{m_1+1}$, 
\begin{itemize}
\item fewer negative integers have been assigned in $e(\textbf{x})(n)$ as compared to $o(\textbf{z})(n)$.
Then $0> e(\textbf{x})(n_{m_1+1}) > o(\textbf{z})(n_{m_1+1})$ and for all subsequent coordinates $n$ with both $e(\textbf{x})(n)$ and $o(\textbf{z})(n)$ being negative, $0> e(\textbf{x})(n) > o(\textbf{z})(n)$ holds.
\item fewer positive integers have been assigned in $o(\textbf{z})(n)$ as compared to $e(\textbf{x})(n)$.
Then $e(\textbf{x})(n_{m_1+2}) > o(\textbf{z})(n_{m_1+2})>0$ and for all subsequent coordinates $n$ with both $e(\textbf{x})(n)$ and $o(\textbf{z})(n)$ being positive, $e(\textbf{x})(n) > o(\textbf{z})(n)>0$ holds.
\end{itemize}
We take $N = n_{m_1+1}$ in this case.}
\item{$|O(m_1)| \geq |E(m_1)|$: Among the coordinates $[n_{m_j+1}, n_{m_{j+1}})$ for all $j\in \w$,  $e(\textbf{x})(n)$ and  $o(\textbf{z})(n)$ contain equally many elements of same sign.
Further for the coordinates in $[n_{m_j}, n_{m_{j}+1})$, $o(\textbf{z})(n)$ is negative but $e(\textbf{x})(n)$ is not.
Thus for some $J\in \w$, 
\[
|O(m_1)| < |E(m_1)| + \left|\underset{j\in \{2, \cdots, J\}}{\bigcup} \left[n_{m_j}, n_{m_j+1}\right)\right|
\] 
will be true.
In this case, we can apply argument of case (i) above for $n_{m_J+1}$ and therefore obtain $N =n_{m_J+1}$.}
\end{enumerate}
Thus we have shown that for all $n>N$, if $e(\textbf{x})(n)$ and $o(\textbf{z})(n)$ share the same sign then $e(\textbf{x})(n)>o(\textbf{z})(n)$.
The remaining situation is $e(\textbf{x})(n)>0>o(\textbf{z})(n)$.
This completes the proof.
\end{proof}

\begin{claim}\label{C2}
There exists a finite permutation $o^{\pi}(\textbf{z})$ of $o(\textbf{z})$ such that $e(\textbf{x})(n)> o^{\pi}(\textbf{z})(n)$ holds for all $n\in \w$.
\end{claim}

\begin{proof}
In claim \ref{C1}, it has been shown that for all $n>N$ $e(\textbf{x})(n)> o(\textbf{z})(n)$.
Let $K:= \{k^0, k^1, \cdots, k^N\}$ be an increasing enumeration of all elements from  the set $\underset{j>J}{\bigcup} \left[n_{m_j}, n_{m_j+1}\right)$.
We permute  $o(\textbf{z})(0)$ and $o(\textbf{z})(k^0)$; $o(\textbf{z})(1)$ and $o(\textbf{z})(k^1)$ and so on till $o(\textbf{z})(N)$ and $o(\textbf{z})(k^N)$ to obtain $o^{\pi}(\textbf{z})$.
Hence, $o^{\pi}(\textbf{z})$ is obtained via a finite permutation of $o(\textbf{z})$.
It is immediate to check that $\pi$ has the desired properties.
\end{proof}

Applying Claims \ref{C1} and \ref{C2}, we have obtained $o^{\pi}(\textbf{z})$ such that 
$e(\textbf{x})(n)>o^{\pi}(\textbf{z})(n)$ for all $n\in \w$.
A implies 
$o(\textbf{z})\sim o^{\pi}(\textbf{z})$, and by WP we get 
$e(\textbf{x})\succ o^{\pi}(\textbf{z})$.
By applying transitivity, we obtain
$e(\textbf{x})\succ o(\textbf{z})$.

Notice that arguments of claims \ref{C1} and \ref{C2} could also be applied to the pair of sequences $e(\textbf{z})$ and $o(\textbf{x})$.
Thus we are able to obtain $o^{\pi}(\textbf{x})$ such that applying A we get 
$
o(\textbf{x})\sim o^{\pi}(\textbf{x}),
$
by WP we get 
$
o^{\pi}(\textbf{x})\prec e(\textbf{z}),
$
and finally, by transitivity it follows
$
o(\textbf{x})\prec e(\textbf{z}).
$
Combining all together we obtain
$o(\textbf{z}) \prec e(\textbf{x}) \prec o(\textbf{x}) \prec e(\textbf{z})$, and so $o(\textbf{z}) \prec e(\textbf{z})$,
which implies
$ 
z\notin \Gamma.
$

\vspace{2mm}

\item Case $o(\textbf{x}) \prec e(\textbf{x})$: Similar to the previous case, only with some different technical details. We remove $n_{m_1}$, $n_{m_j+1}$, $n_{m_j+2}$, for all $j>1$ from $U(x)$ to obtain $z \in 2^\w$ as follows:
\[
z(n) = \Big \{ 
\begin{array}{ll}
x(n) & \text{if $n\notin \left\{n_{m_1}, n_{m_j+1}, n_{m_j+2}: j>1\right\}$}\\
0 &  \text{otherwise}.
\end{array}
\]
Let
\[
\begin{split}
O(m_1):=& [n_1,n_2) \cup [n_3,n_4) \cdots [n_{m_1-1}, n_{m_1}),  \\
E(m_1):=& [n_2,n_3) \cup [n_4,n_5) \cdots [n_{m_1-2}, n_{m_1-1}).
\end{split}
\]
(In case $m_1=2$ put $E(m_1)=\emptyset$.)

Applying Claims \ref{C1} and \ref{C2}, we are able to obtain $e^{\pi}(\textbf{z})$ and  $o^{\pi}(\textbf{z})$ such that 
\(
o(\textbf{x})(n)>e^{\pi}(\textbf{z})(n), \;\text{and}\; o^{\pi}(\textbf{z})(n)>e(\textbf{x})(n)\;\text{for all}\; n\in \w.
\)
A implies 
$
o(\textbf{z})\sim o^{\pi}(\textbf{z}), \;\text{and}\; e(\textbf{z})\sim e^{\pi}(\textbf{z}),
$
and by WP we get 
$
e^{\pi}(\textbf{z})\prec o(\textbf{x}), \;\text{and}\; e(\textbf{x})\prec o^{\pi}(\textbf{z}).
$
By transitivity, it follows
$
e(\textbf{z})\prec o(\textbf{x}), \;\text{and}\; e(\textbf{x})\prec o(\textbf{z}),
$
which leads to 
$
z \in \Gamma.
$
\vspace{2mm}
\item Case $e(\textbf{x}) \sim o(\textbf{x})$: We remove $n_{m_j}$, $n_{m_j+1}$, for all $j>1$ from $U(x)$ to obtain $z \in 2^\w$ as follows:
\[
z(n) = \Big \{ 
\begin{array}{ll}
x(n) & \text{if $n\notin \left\{n_{m_j}, n_{m_j+1}: j>1\right\}$}\\
0 &  \text{otherwise}.
\end{array}
\]
By construction we obtain $o(\textbf{z})(n) \geq o(\textbf{x})(n)$ and $e(\textbf{z})(n) \leq e(\textbf{x})(n)$ for all $n\in \w$.
Further, for all $n > m_1$, $o(\textbf{z})(n) >  o(\textbf{x})(n)$ and $e(\textbf{z})(n) < e(\textbf{x})(n)$.
Applying a similar argument as in the proof of Claim \ref{C2}, by permuting finitely many elements, we are able to obtain $e^{\pi}(\textbf{z})$ and  $o^{\pi}(\textbf{z})$ such that 
\[
o(\textbf{x})(n) < o^{\pi}(\textbf{z})(n), \;\text{and}\; e^{\pi}(\textbf{z})(n) < e(\textbf{x})(n),\;\text{for all}\; n\in \w.
\]
Again, A implies 
$
o(\textbf{z})\sim o^{\pi}(\textbf{z}), \;\text{and}\; e(\textbf{z})\sim e^{\pi}(\textbf{z})
$,
WP implies 
$
o(\textbf{x})\prec o^\pi(\textbf{z}), \;\text{and}\; e^{\pi}(\textbf{z}) \prec e(\textbf{x}),
$
and therefore, by transitivity, it follows
\(
e(\textbf{z})\prec e(\textbf{x}), \;\text{and}\; o(\textbf{x})\prec o(\textbf{z}),
\)
which leads to 
$
z\in \Gamma.
$
\end{itemize}
\end{proof}

\section{Concluding remarks}

The aim of this paper was firstly motivated by answering Problem 11.14 in \cite{Mathias}, but we have then elaborated on more systematically the relationships between total SWRs and other irregular sets. These results might just be the tip of the iceberg of a potentially rather interesting research project, in order to use tools from infinitary combinatorics, forcing theory and descriptive set theory, to give a theoretical structure to the several social welfare relations on infinite utility streams defined in economic theory.
Other economic combinatorial principles which can be investigated are those \emph{\' a la Hammond}: given infinite utility streams $x,y \in X=Y^\omega$, we say that $x \leq_H y$ whenever there are $i \neq j$ such that $x(i) < y(i) < y(j) < x(j)$ and for all $k \neq i,j$, $x(k)=y(k)$.
Intuitively this type of pre-orders assert that a stream is better-off than another one if the distribution reduces the inequality among generations. 

So we consider to elaborate on the following idea: comparing different types of social welfare relations, in particular with respect to the following three categories: procedural equity principles (e.g. anonymity), efficiency principles (e.g. Pareto), consequentialist equity principles (e.g. Hammond), and describe a hierarchy of such relations based on the associated fragment of AC. From a pure theoretical point of view, this may suggest a ranking-method among combinations of the three kinds of principles, analysing a degree of compatibility between them.

This specifically means that one can expand the WR-diagram also with other statements combining these economic principles, or even introduce other similar WR-diagrams and then try to study the possible combinations of $\square$'s and $\blacksquare$'s. 

As a specific question left open in this paper, we consider the following being the most relevant: can one find a ZF-model satisfying $\IP \land \neg \SP$?

\section*{Appendix: on Shelah's amalgamation}

Let $\algebra{B}$ be a complete Boolean algebra and $\algebra{A} \lessdot \algebra{B}$. The \emph{projection} map $\pi: \algebra{B} \rightarrow \algebra{A}$ is defined by $\pi(b)=  \prod \{b \leq a: a \in \algebra{A} \}$.

Let $\algebra{B}$ be a complete Boolean algebra and $\algebraQ_1, \algebraQ_2$ two isomorphic complete subalgebras of $\algebra{B}$ and $\phi_0$ the isomorphism between them.
One defines the \enfa{amalgamation of $B$ over $\phi_0$}, say $\amal(\algebra{B}, \phi_0)$, as the complete Boolean algebra generated by the following set:
\(
\{ (b',b'') \in \algebra{B} \times \algebra{B}: \phi_0(\pi_1(b')) \cdot \pi_2(b'')\neq \mathbf{0} \},
\)
where $\pi_j: \algebra{B} \rightarrow \algebra{B}_j$ is the projection, for $j=1,2$.  Consider on $\amal(B, \phi_0)$ simply
the product order.
One can easily see that $e_j: \algebra{B} \rightarrow \amal(\algebra{B}, \phi_0)$ such that
\[
 e_1(b)= (b,\mathbf{1}) \text{ and } e_2(b)=(\mathbf{1},b)
\]
are both complete embeddings (\cite{JR93}, lemma 3.1), and for any $b_1 \in \algebra{B}_1$, one can show that
\( (b_1, \mathbf{1}) \text{ is equivalent to } (\mathbf{1}, \phi_0(b_1))\);
indeed, assume $(a',a'') \leq (b_1, \mathbf{1})$ and $(a',a'')$ incompatible with $(\mathbf{1}, \phi_0(b_1))$ (in $\amal(\algebra{B}, \phi_0)$). The former implies $\pi_1(a') \leq b_1$, while the latter implies $\pi_2(a'') \cdot \phi_0(b_1)=\mathbf{0}$, and hence one obtains $\phi_0(\pi_1(a'))\cdot \pi_2(a'')=\mathbf{0}$, which means that the pair $(a',a'')$ does not belong to the amalgamation.

Moreover, if one considers $f_1: e_2[\algebra{B}] \rightarrow e_1[\algebra{B}]$ such that, for every
$b \in \algebra{B}$, $f_1(\mathbf{1},b)=(b,\mathbf{1})$, one obtains an isomorphism between two copies of
$\algebra{B}$ into $\amal(\algebra{B}, \phi_0)$, such that $f_1$ is an extension of $\phi_0$ (since for every
$b_1 \in \algebra{B}_1$, $e_1(b_1)= (b_1, \mathbf{1})=(\mathbf{1}, \phi_0(b_1))=e_2(\phi_0(b_1))$, which means $e_1{\upharpoonright} B_1 = e_2\circ \phi_0$).

We can thus consider $e_1[\algebra{B}], e_2[\algebra{B}]$ as two isomorphic complete subalgebras of $\amal(\algebra{B}, \phi_0)$, and then repeat the same
procedure to construct
\[
\amal^2(\algebra{B}, \phi_0) := \amal(\amal(\algebra{B}, \phi_0), f_1)
\]
and $f_2$ the isomorphism between two copies of $\amal(\algebra{B}, \phi_0)$ extending
$f_1$. It is clear that one can continue such a construction, in order to define, for every $n \in \omega$,
\[
\amal^{n+1}(\algebra{B}, \phi_0) :=\amal(\amal^n(\algebra{B}, \phi_0), f_n)
\]
and $f_{n+1}$ the isomorphism between two copies of $\amal^n(\algebra{B}, \phi_0)$ extending $f_n$.
Finally, let
$\amal^\omega(\algebra{B}, \phi_0)$ be the Boolean completion of direct limit of $\amal^n(\algebra{B}, \phi_0)$'s, and
$\phi = \lim_{n \in \omega} f_n$.
One obtains $\algebra{B}_1, \algebra{B}_2 \lessdot \amal^\omega(\algebra{B}, \phi_0)$ and $\phi$ automorphism of
$\amal^\omega(\algebra{B}, \phi_0)$ extending $\phi_0$.
(N.B.: it is common in this framework to abuse terminology by referring to the \emph{Boolean completion} of the direct limit of a sequence of Boolean algebras simply as their \emph{direct limit}, and thus we write $\lim_{\alpha < \lambda} \algebra{B}_\alpha$ for the direct limit understood in this way.)

Note that the \emph{one-step amalgamation} $\amal(B,\phi_0)$ is forcing equivalent to a two step iteration $B_1 * (B/B_1 \times B/B_2)$, where remind $B_2:=\phi_0[B_1]$ and $B/B_1, B/B_2$ denote the quotient-algebras. More precisely we have 
\begin{lemma}  \label{lemma-amal-product}
Let $H$ be a $B_1$-generic filter over the ground model $V$. Then
\[
V[H] \models (B/B_1)^H \times (B/B_2)^{H} \text{ densely embeds into }(\amal(B, \phi_0)/e_1[B_1])^{H} 
\] 
\end{lemma}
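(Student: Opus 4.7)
The plan is to produce an explicit dense embedding
\[
\Phi: (B/B_1)^H \times (B/B_2)^H \longrightarrow \bigl(\amal(B,\phi_0)/e_1[B_1]\bigr)^H
\]
by sending a pair of conditions $([b'],[b''])$ to the equivalence class $[(b',b'')]$ in the quotient algebra. Here $(B/B_2)^H$ is to be read as $(B/B_2)^{\phi_0[H]}$, since the isomorphism $\phi_0:B_1\to B_2$ transports the $B_1$-generic $H$ to a $B_2$-generic $\phi_0[H]$; equivalently, under $e_1 \restric B_1 = e_2\circ\phi_0$ both subalgebras are identified inside $\amal(B,\phi_0)$ and are hit by the same generic.

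The first step would be to compute the projection $\pi:\amal(B,\phi_0)\to e_1[B_1]$ on the generating set
\[
\mathcal{P}:=\{(b',b'')\in B\times B:\phi_0(\pi_1(b'))\cdot\pi_2(b'')\neq \mathbf{0}\}.
\]
Using that $(b',b'')\leq e_1(\pi_1(b'))$ and $(b',b'')\leq e_2(\pi_2(b''))=e_1(\phi_0^{-1}(\pi_2(b'')))$, where the last equality is the identity $(b_1,\mathbf{1})\equiv(\mathbf{1},\phi_0(b_1))$ recalled in the Appendix, one obtains $\pi(b',b'')=e_1\bigl(\pi_1(b')\cdot\phi_0^{-1}(\pi_2(b''))\bigr)$. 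Consequently $(b',b'')$ survives in $(\amal(B,\phi_0)/e_1[B_1])^H$ precisely when $\pi_1(b')\cdot\phi_0^{-1}(\pi_2(b''))\in H$, which, since $H$ is a filter, amounts to $\pi_1(b')\in H$ and $\pi_2(b'')\in\phi_0[H]$ — exactly the condition that $([b'],[b''])$ is a genuine condition in the product $(B/B_1)^H\times(B/B_2)^{\phi_0[H]}$. So $\Phi$ is well-defined.

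Next I would verify order preservation and incompatibility, both of which reduce to the fact that on $\mathcal{P}$ the amalgamation carries the product order: $(b_1',b_1'')\leq(b_2',b_2'')$ iff $b_1'\leq b_2'$ and $b_1''\leq b_2''$, with the analogous statement for incompatibility modulo $H$ resp.\ $\phi_0[H]$. Density is where the structural content sits: since $\mathcal{P}$ is order-dense in $\amal(B,\phi_0)$, its image in the quotient is dense in $(\amal(B,\phi_0)/e_1[B_1])^H$; given any $c\in(\amal(B,\phi_0)/e_1[B_1])^H$ with representative $c\in\amal(B,\phi_0)$, one refines to some $(b',b'')\in\mathcal{P}$ below $c$ whose projection still lies in $H$, and then $(b',b'')$ is in the range of $\Phi$.

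The main technical obstacle is bookkeeping the three equivalence relations in play — the quotient by $H$ on $B/B_1$, the quotient by $\phi_0[H]$ on $B/B_2$, and the quotient by $H$ (viewed through $e_1$) on $\amal(B,\phi_0)/e_1[B_1]$ — and checking that two pairs $(b_1',b_1''),(b_2',b_2'')$ are identified on the domain side iff they are identified on the codomain side. The projection formula $\pi(b',b'')=e_1(\pi_1(b')\cdot\phi_0^{-1}(\pi_2(b'')))$ is exactly what makes this translation mechanical, and once it is in hand, well-definedness, monotonicity, preservation of incompatibility and density all fall out uniformly.
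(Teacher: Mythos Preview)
The paper does not actually prove this lemma: it simply cites \cite[Lemma 3.2]{JR93}. Your proposal is a correct outline of the standard argument that one finds in that reference --- define the map on the dense set of generating pairs, compute the projection of $(b',b'')$ onto $e_1[B_1]$ via the identity $e_2(\pi_2(b''))=e_1(\phi_0^{-1}(\pi_2(b'')))$, and read off well-definedness, order and incompatibility preservation, and density from there. The one place I would ask you to be more careful is the claimed equality $\pi(b',b'')=e_1\bigl(\pi_1(b')\cdot\phi_0^{-1}(\pi_2(b''))\bigr)$: you have shown the inequality $\leq$, but the reverse inequality (that no strictly smaller element of $e_1[B_1]$ sits above $(b',b'')$) requires an argument using the definition of $\mathcal{P}$ and separativity, not just the product order. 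Once that is nailed down, the rest of your sketch goes through as written.
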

For a proof one can see \cite[Lemma 3.2]{JR93}.


\begin{thebibliography}{9}


\bibitem{Arrow} K. Arrow, \emph{Social choice and individual values}, Cowles Foundation (1951).


\bibitem{Asheim2010} G.B. Asheim, \emph{Intergenerational Equity},
               Annual Reviews Economics, 2, 197-222 (2010),
               

\bibitem{Au64} R. J. Aumann, \emph{Markets with a Continuum of Traders},
                Econometrica, Vol. 32, N. 1/2, pp. 39-50 (1964),
             
               
                                             
\bibitem{BJ1995} T. Bartoszy\'nski, H. Judah,
                \emph{Set theory: on the structure of the rela line},
                AK Peters, 1995.

\bibitem{Mathias}N. Bowler, C. Delhomm\' e, C.A. Di Prisco, A.R.D. Mathias,
                \emph{Flutters and Chameleons},
                preprint, 2017.

\bibitem{BL99}J. Brendle, B. L\"{o}we,
                \emph{Solovay-Type characterizations for Forcing-Algebra},
                Journal of Simbolic Logic, Vol. 64 (1999), pp. 1307-1323.

\bibitem{BLH2005}J. Brendle, L. Halbeisen, B. L\"{o}we,
                \emph{Silver measurability and its relationship to other regularity properties},
                Mathematical Proceedings of the Cambridge Philosophical Society 138(1), pp.135-149 (2005).
                
\bibitem{Chi96} G. Chichilnisky, \emph{An axiomatic approach to sustuinable development},
                Social Choice Welfare, Vol. 13, pp. 231-257(1996).
             
                
\bibitem{Chi*} G. Chichilnisky, \emph{The topology of change: foundation of probability with black swans}, Journal of Probability and Statistics (2010).                
                             
                
\bibitem{Fish69} P.C. Fishburn,
                \emph{Arrow's impossibility theorems: concise proofs and infinite voters},
                Journal of Economic Theory 2 (1970), pp. 103-106.

\bibitem{Halbeisen2003} L. Halbeisen,
                \emph{Making Doughnuts of Cohen reals}, 
                Mathematical Logic Quarterly, 49:173-178 (2003).
               (2010).  

\bibitem{Ikegami} D. Ikegami,
                \emph{ Forcing Absoluteness and Regularity Properties}, 
                Annals of Pure and Applied Logic, Volume 161, Issue 7, p. 879-894,
               (2010).  
               
\bibitem{Khomskii} Y. Khomskii,
                \emph{Regularity properties and definability in the real number continuum}, 
                ILLC Dissertation Series DS-2012-04, Insitute for Logic, Language and Computation, Universiteit van Amsterdam, (2012).
               (2010).  
                                            
                                                               
                \bibitem{JR93}H. Judah, A. Roslanowsky,
                \emph{On Shelah's amalgamation},
                Israel Mathematical Conference Proceedings, Vol. 6 (1993), pp. 385-414.
                
                
\bibitem{Kechris} A.S. Kechris,
                \emph{Classical Descriptive Set Theory},
                Spinger-Verlag (1995).                
                                         
                               

\bibitem{L14} G. Laguzzi,
                \emph{On the separation of regularity properties of the reals}, Arch. Math. Logic, Volume 53, Issue 7-8, pp 731-747, (2014).
                
           
                 
\bibitem{Lau09} L. Lauwers,
                \emph{Ordering infinite utility streams comes at the cost of a
non-Ramsey set}, Journal of Math. Economics, Volume 46, pp 32-37, (2009). 

\bibitem{LV97} L. Lauwers, L. Van Liedekerke,
                \emph{Sacrificing the patrol: utilitarianism, future generations and infinity}, Economics and Philosophy, Volume 13, pp 159-174, (1997).
 
 
\bibitem{Litak} T. Litak,
                \emph{Infinite population, choice and determinacy}, Studia Logica, 2017.               
                 
                
\bibitem{Ramsey28} F.P. Ramsey,
                \emph{A mathematical theory of savings},
                The Economic Journal, Vol. 38 (1928), pp. 343-359.            
     

\bibitem{Sh85}S. Shelah,
                \emph{On measure and category},
                Israel Journal of Mathematics, Vol. 52 (1985), pp. 110-114.            
     
     
\bibitem{Z07} W. R. Zame,
                \emph{Can intergenerational equity be operationalized?}, Theoretical Economics 2, pp 187-202, (2007).   
                
             

                 
                             
\end{thebibliography}
\end{document}